\newtheorem{thm}{Theorem}[section]
\newtheorem{lem}[thm]{Lemma}
\newtheorem{prop}[thm]{Proposition}
\numberwithin{equation}{section}
\begin{document}

\title{{Several new Witten rigidity theorems for elliptic genus}}

\author{ Jianyun Guan; Kefeng Liu and Yong Wang*\\
 }

\date{}

\thanks{{\scriptsize
\hskip -0.4 true cm \textit{2010 Mathematics Subject Classification:}
58C20; 57R20; 53C80.
\newline \textit{Key words and phrases:} Twisted Dirac operators; Twisted Toeplitz operators; Equivariant index theorem; Witten rigidity theorem
\newline \textit{* Corresponding author.}}}

\maketitle

\begin{abstract}
Using the Liu's method, we prove a new Witten rigidity theorem of elliptic genus of twisted Dirac operators in even dimensional spin manifolds under the circle action. Combined with the Han-Yu's method, we prove the Witten rigidity theorems of elliptic genus of twisted Toplitz operators of odd-dimensional spin manifolds under the circle action. Moreover, we have obtained several similar Witten rigidity theorems of elliptic genus.
\end{abstract}

\vskip 0.2 true cm

\section{Introduction}

The study of the rigidity of the signature operator, the Dolbeault operator and the Dirac operator on different manifolds is an interesting topic in geometry and mathematical physics. Around 1982, motivated by physics, Witten proved the rigidity of the twisted Dirac operator $D\otimes TX$ for compact homogeneous spin manifolds. In 1988, Witten derived a series of twisted Dirac operators on the free loop space $LM$ of a spin manifold $M$\cite{WE1}. In this work, Witten was surprised to discover that the elliptic genus, constructed topologically by Landweber-Stone\cite{LS} and Ochanine\cite{OS}, is the index of one of the operators. Inspired by physics, Witten conjectured that these elliptic operators should be rigid.
Further, in 1989, the Witten conjecture was first proved by Taubes\cite{TH} and Bott-Taubes\cite{BT}. Hirzebruch \cite{HF} and Krichever\cite{KM} proved Witten conjecture for almost complex manifold case. In 1996, Liu used modular invariance to give a simple and uniform proof, as well as various extensive generalizations of Witten conjecture\cite{Li2,Li3}. And Liu also established several new vanishing theorems in this work. To further generalize, in 2000, Dessai established the rigidity and vanishing theorems for spinc case. Liu-Ma\cite{Li4,Li5} and Liu-Ma-Zhang\cite{Li6,Li7} generalized the rigidity and vanishing theorems to the family case on the levels of equivariant Chern character and of equivariant K-theory.

On the other hand, in 2009, Liu and Wang established the rigidity for twisted Toeplitz operators associated to the Witten bundles by approach\cite{Li2,Li3} under the assumption that the fixed point sets of the group action are 1-dimensional. This work is the first to study the Witten-type rigidity of Toplitz operators\cite{Li8}. In 2015, Han and Yu extend the rigidity and vanishing properties for twisted Toeplitz operators to the cases of fixed points of general dimensions\cite{HY}. In \cite{GW,GW1}, we construct some new $SL(2,\mathbb{Z})$ modular forms, $\Gamma^0(2)$ modular forms and $\Gamma_0(2)$ modular forms in spin manifolds by the modular invariance of characteristic forms. Moreover, we get some divisibility results of index of the twisted Dirac operators and the twisted Toeplitz operators on spin manifolds. The purpose of this paper is to study the rigidity properties of these elliptic genus associated with twisted Dirac operators and twisted Toplitz operators in \cite{GW,GW1} in the case of fixed points of general dimension by using Liu's method and Han-Yu's method.

The structure of this paper is briefly described below: In Section 2, we have introduce some definitions and basic concepts that we will use in the paper. In Section 3, we prove the Witten-type rigidity of twisted Dirac operators in even dimensions. Finally, in section 4, we prove the Witten-type rigidity of twisted Toplitz operators in odd dimensions.\\

\section{Characteristic Forms and Modular Forms}
\quad The purpose of this section is to review the necessary knowledge on
characteristic forms and modular forms that we are going to use.\\

 \noindent {\bf  2.1 characteristic forms }\\
 \indent Let $M$ be a Riemannian manifold.
 Let $\nabla^{ TM}$ be the associated Levi-Civita connection on $TM$
 and $R^{TM}=(\nabla^{TM})^2$ be the curvature of $\nabla^{ TM}$.
 Let $\widehat{A}(TM,\nabla^{ TM})$ and $\widehat{L}(TM,\nabla^{ TM})$
 be the Hirzebruch characteristic forms defined respectively by (cf. \cite{Zh})
\begin{equation}
   \widehat{A}(TM,\nabla^{ TM})={\rm
det}^{\frac{1}{2}}\left(\frac{\frac{\sqrt{-1}}{4\pi}R^{TM}}{{\rm
sinh}(\frac{\sqrt{-1}}{4\pi}R^{TM})}\right),
\end{equation}
 \begin{equation}
     \widehat{L}(TM,\nabla^{ TM})={\rm
 det}^{\frac{1}{2}}\left(\frac{\frac{\sqrt{-1}}{2\pi}R^{TM}}{{\rm
 tanh}(\frac{\sqrt{-1}}{4\pi}R^{TM})}\right).
 \end{equation}
   Let $E$, $F$ be two Hermitian vector bundles over $M$ carrying
   Hermitian connection $\nabla^E,\nabla^F$ respectively. Let
   $R^E=(\nabla^E)^2$ (resp. $R^F=(\nabla^F)^2$) be the curvature of
   $\nabla^E$ (resp. $\nabla^F$). If we set the formal difference
   $G=E-F$, then $G$ carries an induced Hermitian connection
   $\nabla^G$ in an obvious sense. We define the associated Chern
   character form as
   \begin{equation}
       {\rm ch}(G,\nabla^G)={\rm tr}\left[{\rm
   exp}(\frac{\sqrt{-1}}{2\pi}R^E)\right]-{\rm tr}\left[{\rm
   exp}(\frac{\sqrt{-1}}{2\pi}R^F)\right].
   \end{equation}
   For any complex number $t$, let
   $$\wedge_t(E)={\bf C}|_M+tE+t^2\wedge^2(E)+\cdots,~S_t(E)={\bf
   C}|_M+tE+t^2S^2(E)+\cdots$$
   denote respectively the total exterior and symmetric powers of
   $E$, which live in $K(M)[[t]].$ The following relations between
   these operations hold,
   \begin{equation}
       S_t(E)=\frac{1}{\wedge_{-t}(E)},~\wedge_t(E-F)=\frac{\wedge_t(E)}{\wedge_t(F)}.
   \end{equation}
   Moreover, if $\{\omega_i\},\{\omega_j'\}$ are formal Chern roots
   for Hermitian vector bundles $E,F$ respectively, then
   \begin{equation}
       {\rm ch}(\wedge_t(E))=\prod_i(1+e^{\omega_i}t)
   \end{equation}
   Then we have the following formulas for Chern character forms,
   \begin{equation}
       {\rm ch}(S_t(E))=\frac{1}{\prod_i(1-e^{\omega_i}t)},~
{\rm ch}(\wedge_t(E-F))=\frac{\prod_i(1+e^{\omega_i}t)}{\prod_j(1+e^{\omega_j'}t)}.
   \end{equation}
\indent If $W$ is a real Euclidean vector bundle over $M$ carrying a
Euclidean connection $\nabla^W$, then its complexification $W_{\bf
C}=W\otimes {\bf C}$ is a complex vector bundle over $M$ carrying a
canonical induced Hermitian metric from that of $W$, as well as a
Hermitian connection $\nabla^{W_{\bf C}}$ induced from $\nabla^W$.
If $E$ is a vector bundle (complex or real) over $M$, set
$\widetilde{E}=E-{\rm dim}E$ in $K(M)$ or $KO(M)$.\\

\noindent{\bf 2.2 Some properties about the Jacobi theta functions
and modular forms}\\
   \indent We first recall the four Jacobi theta functions are
   defined as follows( cf. \cite{Ch}):
   \begin{equation}
      \theta(v,\tau)=2q^{\frac{1}{8}}{\rm sin}(\pi
   v)\prod_{j=1}^{\infty}[(1-q^j)(1-e^{2\pi\sqrt{-1}v}q^j)(1-e^{-2\pi\sqrt{-1}v}q^j)],
   \end{equation}
\begin{equation}
    \theta_1(v,\tau)=2q^{\frac{1}{8}}{\rm cos}(\pi
   v)\prod_{j=1}^{\infty}[(1-q^j)(1+e^{2\pi\sqrt{-1}v}q^j)(1+e^{-2\pi\sqrt{-1}v}q^j)],
\end{equation}
\begin{equation}
    \theta_2(v,\tau)=\prod_{j=1}^{\infty}[(1-q^j)(1-e^{2\pi\sqrt{-1}v}q^{j-\frac{1}{2}})
(1-e^{-2\pi\sqrt{-1}v}q^{j-\frac{1}{2}})],
\end{equation}
\begin{equation}
   \theta_3(v,\tau)=\prod_{j=1}^{\infty}[(1-q^j)(1+e^{2\pi\sqrt{-1}v}q^{j-\frac{1}{2}})
(1+e^{-2\pi\sqrt{-1}v}q^{j-\frac{1}{2}})],
\end{equation}
 \noindent
where $q=e^{2\pi\sqrt{-1}\tau}$ with $\tau\in\textbf{H}$, the upper
half complex plane. Let
\begin{equation}
    \theta'(0,\tau)=\frac{\partial\theta(v,\tau)}{\partial v}|_{v=0}.
\end{equation} \noindent Then the following Jacobi identity
(cf. \cite{Ch}) holds,
\begin{equation}   \theta'(0,\tau)=\pi\theta_1(0,\tau)\theta_2(0,\tau)\theta_3(0,\tau).
\end{equation}
\noindent Denote $$SL_2({\bf Z})=\left\{\left(\begin{array}{cc}
\ a & b  \\
 c  & d
\end{array}\right)\mid a,b,c,d \in {\bf Z},~ad-bc=1\right\}$$ the
modular group. Let $S=\left(\begin{array}{cc}
\ 0 & -1  \\
 1  & 0
\end{array}\right),~T=\left(\begin{array}{cc}
\ 1 &  1 \\
 0  & 1
\end{array}\right)$ be the two generators of $SL_2(\bf{Z})$. They
act on $\textbf{H}$ by $S\tau=-\frac{1}{\tau},~T\tau=\tau+1$.

\noindent
 \noindent {\bf Definition 2.1} A modular form over $\Gamma$, a
 subgroup of $SL_2({\bf Z})$, is a holomorphic function $f(\tau)$ on
 $\textbf{H}$ such that
 \begin{equation}
    f(g\tau):=f\left(\frac{a\tau+b}{c\tau+d}\right)=\chi(g)(c\tau+d)^kf(\tau),
 ~~\forall g=\left(\begin{array}{cc}
\ a & b  \\
 c & d
\end{array}\right)\in\Gamma,
 \end{equation}
\noindent where $\chi:\Gamma\rightarrow {\bf C}^{\star}$ is a
character of $\Gamma$. $k$ is called the weight of $f$.\\
Let $$\Gamma_0(2)=\left\{\left(\begin{array}{cc}
\ a & b  \\
 c  & d
\end{array}\right)\in SL_2({\bf Z})\mid c\equiv 0~({\rm
mod}~2)\right\},$$
$$\Gamma^0(2)=\left\{\left(\begin{array}{cc}
\ a & b  \\
 c  & d
\end{array}\right)\in SL_2({\bf Z})\mid b\equiv 0~({\rm
mod}~2)\right\},$$
be the two modular subgroups of $SL_2({\bf Z})$.
It is known that the generators of $\Gamma_0(2)$ are $T,~ST^2ST$,
the generators of $\Gamma^0(2)$ are $STS,~T^2STS$ (cf. \cite{Ch}).

\section{Twisted Dirac operator and Witten rigidity theorem in even dimensions}
Let $M$ be a $4k$-dimensional spin manifold and $\triangle(M)$ be the spinor bundle. Let $\widetilde{T_{\mathbf{C}}M}=T_{\mathbf{C}}M-\dim M$.
 Set
 \begin{equation}
   \Theta_1(T_{\mathbf{C}}M)=
   \bigotimes _{n=1}^{\infty}S_{q^n}(\widetilde{T_{\mathbf{C}}M})\otimes
\bigotimes _{m=1}^{\infty}\Lambda_{q^m}(\widetilde{T_{\mathbf{C}}M})
,\end{equation}
\begin{equation}
\Theta_2(T_{\mathbf{C}}M)=\bigotimes _{n=1}^{\infty}S_{q^n}(\widetilde{T_{\mathbf{C}}M})\otimes
\bigotimes _{m=1}^{\infty}\Lambda_{-q^{m-\frac{1}{2}}}(\widetilde{T_{\mathbf{C}}M}),
\end{equation}
\begin{equation}
\Theta_3(T_{\mathbf{C}}M)=\bigotimes _{n=1}^{\infty}S_{q^n}(\widetilde{T_{\mathbf{C}}M})\otimes
\bigotimes _{m=1}^{\infty}\Lambda_{q^{m-\frac{1}{2}}}(\widetilde{T_{\mathbf{C}}M}).
\end{equation}
Let $V$ be a rank $2l$ real vector bundle on $M$. Moreover, $V_{\mathbf{C}}=V\otimes\mathbf{C}.$  Set
\begin{equation}
Q_1(V_{\mathbf{C}})=\Delta(V)\otimes\bigotimes_{n=1}^{\infty}\Lambda_{q^n}(\widetilde{V_{\mathbf{C}}}),
\end{equation}
\begin{equation}
Q_2(V_{\mathbf{C}})=\bigotimes_{n=1}^{\infty}\Lambda_{-q^{n-\frac{1}{2}}}(\widetilde{V_{\mathbf{C}}}),
\end{equation}
\begin{equation}
Q_3(V_{\mathbf{C}})=\bigotimes_{n=1}^{\infty}\Lambda_{q^{n-\frac{1}{2}}}(\widetilde{V_{\mathbf{C}}}).
\end{equation}

Let
\begin{equation}
  \Phi_0(M)=\Delta(M)\otimes \Theta_1(T_{\mathbf{C}}M)+2^{2k}\Theta_2(T_{\mathbf{C}}M)+2^{2k}\Theta_3(T_{\mathbf{C}}M),
\end{equation}
\begin{equation}
  \Phi(M,V)=\Phi_0\otimes(Q_1(V))\otimes(Q_2(V))\otimes(Q_3(V)).
\end{equation}

Let $M$ be a closed smooth spin Riemannian manifold which admits a circle action. Without loss of generality, we may assume that $S^1$ acts on $M$ isometrically and preserves the spin structure of $M$.

Let $V$ be an oriented real rank-$2l$ vector bundle on a manifold $M$, and $V$ is equipped with an $S^1$ action which restricts on each $V$-fiber over $M$ to a linear action preserving that fiber. Then the associated twisted Dirac operator is $S^1$-equivariant, which implies that the corresponding orthogonal
projection $P_+$ is also $S^1$-equivariant.

Next we assume $g=e^{2\pi \mathbf{i}t}\in S^1$ be a generator of the action group and let $F_{\alpha}$ denote the fixed submanifold of the circle action on $M$. In general, $F_{\alpha}$ is not connected. Let $\mathbf{N}$ denote the normal bundle to $F_{\alpha}$ in $M$, which can be identified as the orthogonal complement of $TF_{\alpha}$ in $TM|_{F_{\alpha}}$. Then we have the following $S^1-$equivariant decomposition when restricted upon $F_{\alpha}$,
\begin{equation}
  TM|_{F_{\alpha}}=\mathbf{N}_1\oplus\cdots\oplus \mathbf{N}_{2r}\oplus TF_{\alpha},
\end{equation}
where each $\mathbf{N}_{\beta},$~$\beta=1,\cdots,2r$ is a complex vector bundle, and that $g$ acts on $\mathbf{N}_{\beta}$
by $e^{2\pi \mathbf{i}m_it}$. We assume $\{\pm2\pi\mathbf{i}x_i, \ 1\leq i\leq2r\}$ be the Chern roots of $\mathbf{N}_{\beta}\otimes\mathbf{C}$. Let $\{\pm2\pi\mathbf{i}y_j, \ 1\leq j\leq2s\}$ be the Chern roots of $TF_{\alpha}\otimes\mathbf{C}$.

Similarly, let
\begin{equation}
  V|_{F_\alpha}=V_1\oplus\cdots\oplus V_l\oplus V^{\mathbb{R}}_0
\end{equation}
be the $S^1$-equivariant decomposition of the restrictions of $V$ over $F_\alpha$, where $V_\nu$ is a complex vector bundle and $V^{\mathbb{R}}_0$ is the real subbundle of $V|_{F_\alpha}$. Assume that $g$ acts on $V_\nu$ by $e^{2\pi \mathbf{i}n_\nu t}$. And let $\{\pm2\pi\mathbf{i}z_\nu, \ 1\leq\nu\leq l\}$ be the Chern roots of $V_\nu\otimes\mathbf{C}$. Let $\{\pm2\pi\mathbf{i}z^0_\nu\}$ be the Chern roots of $V^{\mathbb{R}}_0\otimes\mathbf{C}$.

We suppose $p_1(\cdot)_{S^1}$ denote the first $S^1$-equivariant pontrjagin class, then we have the following rigidity theorems.
\begin{thm}
  For an even dimensional connected spin manifold with non-trivial $S^1$-action, if $3p_1(V)_{S^1}=0$, then the twisted Dirac operators $\mathcal{D}\otimes\Phi$ are rigid.
\end{thm}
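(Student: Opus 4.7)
The plan is to follow Liu's modular invariance strategy. The first step is to apply the equivariant Atiyah--Bott--Segal--Singer fixed point formula to the $S^1$-equivariant index of $\mathcal{D}\otimes\Phi(M,V)$. Because $\Phi_0(M)$ is a sum of three terms, the index splits as a sum of three pieces; each piece, after localization to the fixed submanifolds $F_\alpha$, is expressible in terms of the Chern roots $\{\pm 2\pi\mathbf{i}x_i\}$, $\{\pm 2\pi\mathbf{i}y_j\}$, $\{\pm 2\pi\mathbf{i}z_\nu\}$, $\{\pm 2\pi\mathbf{i}z_\nu^0\}$ and the rotation numbers $m_i,n_\nu$. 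Upon rewriting the characteristic classes appearing in the fixed point formula via the Jacobi theta functions $\theta,\theta_1,\theta_2,\theta_3$ (using the standard product expansions together with the identity $\theta'(0,\tau)=\pi\theta_1\theta_2\theta_3$), each of the three summands is packaged as a meromorphic function $P_i(\tau,t)$ on $\mathbf{H}\times\mathbf{C}$, with $t$ tracking the $S^1$ parameter and $\tau$ the modular parameter.

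The next step is to establish the modular transformation laws of the $P_i(\tau,t)$. Under the generators $S,T$ of $SL_2(\mathbf{Z})$, the theta functions transform with well-known factors, and consequently the three summands $P_1,P_2,P_3$ get permuted among each other up to exponential anomaly factors. These anomalies take the form $\exp\!\bigl(c\,\tau\,A(\tau,t)\bigr)$, where the quadratic form $A(\tau,t)$ combines a contribution in $p_1(TM)_{S^1}$ (which vanishes by the spin assumption, exactly as in Liu's original treatment) together with three identical contributions coming from the $Q_1,Q_2,Q_3$ factors, each proportional to $p_1(V)_{S^1}$. It is here that the hypothesis $3p_1(V)_{S^1}=0$ enters: it forces the total $V$-anomaly from the three $Q_i$'s to vanish, so that the $P_i$ transform into one another genuinely as (Jacobi) forms, with no residual exponential factor.

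Once the transformation laws are in place, the rigidity argument proceeds as in \cite{Li2,Li3,HY}. Using the fixed point formula together with the transformation properties, one checks that each $P_i(\tau,t)$ is in fact holomorphic in $t$ on all of $\mathbf{C}$: the only possible poles come from the denominators $\theta(x_i+m_i t,\tau)$ in the fixed point contributions, and standard arguments (comparing poles with those in the transformed picture after applying $S$ or $ST^{\pm 1}$) show that these apparent poles mutually cancel across fixed components. Since each $P_i(\tau,\cdot)$ is then a doubly quasi-periodic holomorphic function of $t$, it must be independent of $t$; summing the three pieces gives that the full equivariant index of $\mathcal{D}\otimes\Phi$ is a constant function on $S^1$, which is the rigidity statement.

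The main obstacle is the precise bookkeeping in the second step: verifying that the anomaly factors arising from $\Theta_1,\Theta_2,\Theta_3$ together with the three $Q_j(V)$ factors assemble exactly into a single multiple of $3p_1(V)_{S^1}+(\text{spin terms})$, so that the hypothesis $3p_1(V)_{S^1}=0$ kills them. Once that cancellation is secured, the holomorphicity in $t$ and the ``constant-in-$t$'' conclusion are routine consequences of the Jacobi transformation properties already recorded in Section~2.
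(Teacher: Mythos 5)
Your proposal follows the same Liu-style modular-invariance strategy the paper uses: compute the Lefschetz number via the equivariant fixed point formula and package it in Jacobi theta functions; establish the transformation laws under the lattice $2\mathbb{Z}+2\mathbb{Z}\tau$ and under $SL_2(\mathbb{Z})$, using $3p_1(V)_{S^1}=0$ to kill the exponential anomaly coming from the three $Q_j(V)$ factors; then run a pole-moving argument to show holomorphicity in $t$ and conclude constancy by Liouville. One small correction in your bookkeeping: the $TM$-related anomaly factors from the $\widehat A$-piece (the $\theta'/\theta$ terms) and from the $\Theta_\omega$-piece (the $\theta_\omega(\cdot)/\theta_\omega(0)$ terms) appear with opposite signs and cancel \emph{exactly}, independently of any hypothesis on $p_1(TM)_{S^1}$; the spin assumption guarantees the operator and the half-integer weights are well defined, not that $p_1(TM)_{S^1}=0$ (which is generally false). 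This does not affect the validity of your argument since the cancellation you need does occur; it just occurs structurally rather than by hypothesis.
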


First, we calculate the corresponding Lefschetz numbers.
\begin{prop}
 The Lefschetz numbers $\mathcal{D}\otimes\Phi$ are
\begin{equation}
  \begin{split}
  \mathcal{L}(g;\tau)&=2^{2s+2r+l}\left(\frac{1}{\pi}\right)^{2r}\sum^{2s}_{\alpha=1}
  \int_{F_\alpha}\prod^{2s}_{j=1}
  \left(y_j\frac{\theta'(0,\tau)}{\theta(y_j,\tau)}\right)\\
  &\cdot\prod^{2r}_{i=1}
  \left(\frac{\theta'(0,\tau)}{\theta(x_i+m_it,\tau)}\right)
  \sum_{\omega=1}^3\left(\prod^{2s}_{j=1}
\frac{\theta_\omega(y_j,\tau)}{\theta_\omega(0,\tau)}\prod^{2r}_{i=1}
 \frac{\theta_\omega(x_i+m_it,\tau)}{\theta_\omega(0,\tau)}\right)\\
 &\cdot\prod_{\nu=1}^l\frac{\theta_1(
 z_\nu+\eta_\nu t,\tau)\theta_2(
 z_\nu+\eta_\nu t,\tau)\theta_3(
 z_\nu+\eta_\nu t,\tau)}{\theta_1(0,\tau)\theta_2(0,\tau)\theta_3(0,\tau)},
  \end{split}
\end{equation}
\end{prop}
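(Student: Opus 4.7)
The plan is to apply the Atiyah--Bott--Segal--Singer Lefschetz fixed-point formula to the $S^1$-equivariant twisted operator $\mathcal{D}\otimes \Phi(M,V)$ at the generator $g=e^{2\pi\mathbf{i}t}$, expand each bundle entering $\Phi(M,V)$ via the equivariant decompositions of $TM|_{F_\alpha}$ and $V|_{F_\alpha}$, and then convert the resulting infinite products of exponentials into ratios of Jacobi theta functions using the product formulas (2.7)--(2.10) together with the Jacobi identity (2.12). Since
\[
\Phi(M,V)=\bigl(\Delta(M)\otimes\Theta_1(T_{\mathbf{C}}M)+2^{2k}\Theta_2(T_{\mathbf{C}}M)+2^{2k}\Theta_3(T_{\mathbf{C}}M)\bigr)\otimes Q_1(V_{\mathbf{C}})\otimes Q_2(V_{\mathbf{C}})\otimes Q_3(V_{\mathbf{C}}),
\]
the Lefschetz number splits naturally as the sum over $\omega\in\{1,2,3\}$ appearing in the target formula.

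First I would apply the Lefschetz formula to $\mathcal{D}\otimes E$ for each of the three equivariant bundles obtained by distributing the sum in $\Phi_0$. The contribution of a fixed component $F_\alpha$ then has the standard form
\[
\int_{F_\alpha}\widehat{A}(TF_\alpha,\nabla^{TF_\alpha})\,\frac{\mathrm{ch}_g(E|_{F_\alpha})}{\prod_i\bigl(e^{\pi\mathbf{i}(x_i+m_it)}-e^{-\pi\mathbf{i}(x_i+m_it)}\bigr)},
\]
in which the denominator encodes the normal-bundle contribution $\mathrm{ch}_g(\Lambda_{-1}(\mathbf{N}^{\ast}))$, while $\widehat{A}(TF_\alpha)$ provides the tangent factor $\prod_j y_j/(e^{\pi\mathbf{i}y_j}-e^{-\pi\mathbf{i}y_j})$. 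Using (2.5)--(2.6), the equivariant Chern characters of $\Theta_\omega(T_{\mathbf{C}}M)$ and $Q_\omega(V_{\mathbf{C}})$ then become explicit infinite products over the Chern roots $x_i,y_j,z_\nu,z_\nu^0$ and the equivariant weights $m_i,n_\nu$.

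Next I would recognise those infinite products as theta quotients. Concretely, the product $\prod_{n\geq 1}(1-e^{\pm 2\pi\mathbf{i}(x_i+m_it)}q^n)$ coming from $S_{q^n}$, combined with the $\widehat{A}$ denominator $2\mathbf{i}\sin(\pi(x_i+m_it))$ and the appropriate powers of $q^{1/8}\prod_n(1-q^n)$, reproduces (2.7) and yields $\theta'(0,\tau)/\theta(x_i+m_it,\tau)$ for each normal Chern root; the analogous manipulation on the tangent side yields $y_j\theta'(0,\tau)/\theta(y_j,\tau)$ for each $y_j$. The remaining exterior-power factors inside $\Theta_1$ combine via (2.8) into the ``$\theta_1$-over-$\theta_1(0)$'' quotients; substituting $\Theta_2$ or $\Theta_3$ for $\Theta_1$ produces the analogous $\theta_2$ or $\theta_3$ quotients via (2.9)--(2.10), which assembles the three summands of $\Phi_0$ into $\sum_{\omega=1}^3$. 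The factor $Q_1\otimes Q_2\otimes Q_3$ on $V|_{F_\alpha}$ contributes the last line $\prod_\nu\theta_1\theta_2\theta_3(z_\nu+n_\nu t,\tau)/(\theta_1(0,\tau)\theta_2(0,\tau)\theta_3(0,\tau))$, with the $\Delta(V)$ inside $Q_1$ supplying the $\cos(\pi v)$-factor that converts the exterior-power product into $\theta_1$ via (2.8); the trivially-acting subbundle $V_0^{\mathbb R}$ produces factors that either equal $1$ or are absorbed into the normalization.

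The main obstacle will be the careful bookkeeping of the constants and of the $q^{1/8}$ and $\prod_n(1-q^n)$ powers appearing in (2.7)--(2.10): one has to verify that the tilde normalizations $\widetilde{T_{\mathbf{C}}M}=T_{\mathbf{C}}M-\dim M$ and $\widetilde{V_{\mathbf{C}}}=V_{\mathbf{C}}-2l$ cancel these residual factors exactly, so that only the clean theta quotients survive. The overall numerical factor $2^{2s+2r+l}$ then arises from the rank $2^{2k}=2^{2s+2r}$ of the spinor $\Delta(M)|_{F_\alpha}$ in the $\omega=1$ summand together with the explicit $2^{2k}$ prefactors built into $\Phi_0$ for $\omega=2,3$, and from the $2^l$ coming from $\mathrm{rk}\,\Delta(V)$; the prefactor $(1/\pi)^{2r}$ is produced by invoking (2.12) to rewrite each normal-direction $\theta'(0,\tau)$ as $\pi\theta_1(0,\tau)\theta_2(0,\tau)\theta_3(0,\tau)$ so as to match the $\theta_\omega(0,\tau)$-denominators demanded by the target expression.
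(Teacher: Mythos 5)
Your proposal matches the paper's approach: both apply the equivariant Lefschetz fixed-point formula, compute $\widehat{A}(TF_\alpha)$ and the normal-bundle denominator, evaluate ${\rm ch}_g$ of each tensor factor of $\Phi(M,V)$ via (2.5)--(2.6), and recognize the resulting infinite products as the theta quotients (2.7)--(2.10), with the Jacobi identity (2.12) supplying the $(1/\pi)^{2r}$ factor. The paper simply records the intermediate formulas (its (3.13)--(3.17)) and combines them, whereas you narrate the same computation with more attention to how the $q^{1/8}$ and $\prod_n(1-q^n)$ factors cancel against the tilde-normalizations and to the provenance of the numerical prefactor $2^{2s+2r+l}$; these are faithful elaborations of the steps the paper leaves implicit, not a different route.
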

\begin{proof}
 By Lefschetz fixed point formula
 \begin{equation}
   \mathcal{L}(g)=\sum_i\int_{F_i}\hat{A}(TF_i)\left[Pf\left(2\sinh\left(\frac{\Omega^\perp}{4\pi}+
   \sqrt{-1}\frac{\Theta_j}{2}\right)\right)\right]^{-1}{\rm ch}_g(E).
 \end{equation}
 We can calculate it directly
\begin{equation}
  \hat{A}(TF_\alpha)\left[Pf\left(2\sinh\left(\frac{\Omega^\perp}{4\pi}+\sqrt{-1}
  \frac{\Theta_\beta}{2}\right)\right)\right]^{-1}=\prod_{j=1}^{2s}\frac{\pi y_j}{\sinh \pi y_j}\prod^{2r}_{i=1}\frac{1}{\sinh\pi(x_i+m_it)},
\end{equation}
\begin{equation}
 \begin{split}
  {\rm ch}_g(\Phi_0)=&\left(\frac{2}{\pi}\right)^{2s+2r}\prod_{j=1}^{2s}\sin(\pi y_j)\left(\frac{\theta'(0,\tau)}{\theta(y_j,\tau)}\right)\prod_{i=1}^{2r}\sin(\pi (x_i+m_it))\left(\frac{\theta'(0,\tau)}{\theta(x_i+m_it,\tau)}\right)\\
  &\cdot\sum_{\omega=1}^3
  \left(\prod^{2s}_{j=1}\frac{\theta_\omega(y_j,\tau)}{\theta_\omega(0,\tau)}\prod^{2r}_{i=1}
 \frac{\theta_\omega(x_i+m_it,\tau)}{\theta_\omega(0,\tau)}\right),
 \end{split}
\end{equation}
\begin{equation}
  {\rm ch}_g\left(\Delta(V)\otimes
  \bigotimes_{n=1}^{\infty}\Lambda_{q^n}(\widetilde{V_{\mathbf{C}}})\right)=2^{l}
  \prod_{\nu=1}^l\frac{\theta_1(z_\nu+\eta_\nu t,\tau)}{\theta_1(0,\tau)},
\end{equation}
\begin{equation}
 {\rm ch}_g\left(
\bigotimes_{n=1}^{\infty}\Lambda_{-q^{n-\frac{1}{2}}}(\widetilde{V_{\mathbf{C}}})\right) =\prod_{\nu=1}^l\frac{\theta_2(z_\nu+\eta_\nu t,\tau)}{\theta_2(0,\tau)},
\end{equation}
\begin{equation}
  {\rm ch}_g\left(
  \bigotimes_{n=1}^{\infty}\Lambda_{q^{n-\frac{1}{2}}}(\widetilde{V_{\mathbf{C}}})\right)=\prod_{\nu=1}^l
  \frac{\theta_3(z_\nu+\eta_\nu t,\tau)}{\theta_3(0,\tau)}.
\end{equation}
So,it's true that we get Proposition 3.2 by (3.12)-(3.17).
\end{proof}

In the follows, let us view the above expression as defining a function $\mathcal{L}'(t,\tau)$, such that $\mathcal{L}'(t,\tau)=\mathcal{L}(g;\tau)$.
The expression for $\mathcal{L}'(t,\tau)$ involves only complex-differentiable functions, so we can extend its domain to every complex number $t$ and choice of $\tau$ in the open upper half-plane, i.e. $\mathcal{L}'(t,\tau)$ where the expression exists in $\mathbf{C}\times\mathbf{H}$. The Witten rigidity theorems are equivalent to that these $\mathcal{L}'(t,\tau)$ are independent of $t$.  Then, we have the following lemma.
\begin{lem}
Let $(t,\tau)\in\mathbf{C}\times\mathbf{H}$ be in the domain of $\mathcal{L}'(t,\tau)$.\\
(1)Then $\mathcal{L}'(t,\tau)=\mathcal{L}'(t+a,\tau)$ for any $a\in 2\mathbb{Z}$.\\
(2)If $3p_1(V)_{S^1}=0$, then $\mathcal{L}'(t,\tau)=\mathcal{L}'(t+a\tau,\tau)$ for any $a\in 2\mathbb{Z}$.
\end{lem}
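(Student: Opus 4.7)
The plan is to verify both assertions by direct substitution into the theta-function expression (3.6) for $\mathcal{L}'(t,\tau)$ and tracking how each Jacobi theta factor transforms under the prescribed translation, using the hypothesis $3p_1(V)_{S^1}=0$ only in part (2). Part (1) is immediate: all four theta functions (2.7)--(2.10) are invariant under $v\mapsto v+2$, and under $t\mapsto t+a$ with $a\in 2\mathbb{Z}$ the arguments $x_i+m_it$ and $z_\nu+n_\nu t$ shift by the even integers $am_i$ and $an_\nu$ while the $y_j$'s are untouched, so every factor in the integrand of (3.6) is unchanged.

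For part (2) I would use the quasi-periodicity $\theta_j(v+a\tau,\tau)=\eta_j^a q^{-a^2/2}e^{-2\pi iav}\theta_j(v,\tau)$ with $\eta_0=-1$ (for $\theta$), $\eta_1=1$, $\eta_2=-1$, $\eta_3=1$. On the normal-bundle side, for each $i$ and each $\omega\in\{1,2,3\}$ the combined factor $\bigl(\theta'(0,\tau)/\theta(x_i+m_it,\tau)\bigr)\cdot\bigl(\theta_\omega(x_i+m_it,\tau)/\theta_\omega(0,\tau)\bigr)$ picks up only the sign $(\eta_\omega/\eta_0)^{am_i}=(-\eta_\omega)^{am_i}$, because the $q$-powers and exponentials cancel exactly between numerator and denominator; since $a$ is even, these signs are all $1$, so the normal-bundle contribution is unchanged. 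The entire anomaly is therefore concentrated in the $V$-factor $\prod_\nu\theta_1\theta_2\theta_3(z_\nu+n_\nu t,\tau)$, which under $t\mapsto t+a\tau$ is multiplied by $\prod_\nu(-1)^{an_\nu}q^{-3(an_\nu)^2/2}e^{-6\pi ian_\nu(z_\nu+n_\nu t)}$; the signs drop for $a$ even (using $\eta_1\eta_2\eta_3=-1$), leaving the anomaly factor
\[
\exp\!\Bigl(-3\pi i\,a\bigl[a\tau\sum_\nu n_\nu^2+2\sum_\nu n_\nu z_\nu+2t\sum_\nu n_\nu^2\bigr]\Bigr).
\]

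The decisive step is to recognize that, identifying the $S^1$-equivariant parameter with $t$ and writing $p_1(V)_{S^1}|_{F_\alpha}(t)=\sum_\nu(z_\nu+n_\nu t)^2+\sum_{\nu'}(z_{\nu'}^0)^2$, a direct expansion yields $p_1(V)_{S^1}|_{F_\alpha}(t+a\tau)-p_1(V)_{S^1}|_{F_\alpha}(t)=a\tau\bigl[a\tau\sum_\nu n_\nu^2+2\sum_\nu n_\nu z_\nu+2t\sum_\nu n_\nu^2\bigr]$, so the anomaly exponent above equals $-(\pi i/\tau)\cdot 3\bigl[p_1(V)_{S^1}|_{F_\alpha}(t+a\tau)-p_1(V)_{S^1}|_{F_\alpha}(t)\bigr]$. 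Under $3p_1(V)_{S^1}=0$, both $3p_1(V)_{S^1}|_{F_\alpha}(t+a\tau)$ and $3p_1(V)_{S^1}|_{F_\alpha}(t)$ are exact forms on $F_\alpha$, so their difference is exact, and hence so is the anomaly exponent. Expanding $\exp$ as $1+X+X^2/2+\cdots$ and using that an exact form, together with each of its positive powers, integrates to zero against a closed form on the closed manifold $F_\alpha$, only the constant $1$ contributes, giving $\mathcal{L}'(t+a\tau,\tau)=\mathcal{L}'(t,\tau)$.

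The main obstacle is this final cohomological step: justifying the passage from $3p_1(V)_{S^1}=0$ as an $S^1$-equivariant class on $M$ to exactness of the specialization $3p_1(V)_{S^1}|_{F_\alpha}(t')$ as a genuine differential form on each fixed component $F_\alpha$ for every value of $t'$, together with the vanishing of the higher-power integrals. By contrast, the sign and $q$-prefactor bookkeeping needed to trivialize the normal-bundle anomaly is routine once the evenness of $a$ is invoked.
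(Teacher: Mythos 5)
Your approach is essentially Liu's method, which is exactly what the paper does: for (1), observe the $2\mathbb{Z}$-periodicity of the four theta functions; for (2), apply the quasi-periodicity $\theta_j(v+a\tau,\tau)=\eta_j^a q^{-a^2/2}e^{-2\pi i a v}\theta_j(v,\tau)$, note that the normal-bundle anomaly cancels between the $\theta$ in the denominator and the $\theta_\omega$ in the numerator, and concentrate the anomaly in the $V$-factor. Where you differ from the paper is in phrasing the final step: you identify the surviving anomaly exponent with $-\tfrac{3\pi i}{\tau}\bigl[p_1(V)_{S^1}|_{F_\alpha}(t+a\tau)-p_1(V)_{S^1}|_{F_\alpha}(t)\bigr]$ and then argue that exactness of this form, together with the vanishing of integrals of its powers against the closed remainder, forces $\mathcal{L}'$ to be unchanged. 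The paper instead extracts the coefficient-level consequences directly: restricting $p_1(V)_{S^1}$ to $F_\alpha$ gives $\sum_\nu(z_\nu+n_\nu u)^2+\sum(z^0_{\nu})^2$ in $H^*(F_\alpha)[u]$, and $3p_1(V)_{S^1}=0$ (hence $p_1(V)_{S^1}=0$ over $\mathbb{Q}$) forces each coefficient of $u^k$ to vanish, yielding $\sum_\nu n_\nu z_\nu=0$ in $H^2(F_\alpha;\mathbb{Q})$ and $\sum_\nu n_\nu^2=0$. That makes the anomaly exponent literally zero, so there is nothing further to integrate.

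The "main obstacle" you flag is therefore not actually a gap -- it is precisely this standard equivariant restriction fact, which the paper invokes directly. Note also that your exactness argument is a bit wasteful: an exact mixed-degree form has no nonzero constant piece (there are no exact $0$-forms besides $0$ on a closed connected $F_\alpha$), so exactness already implies $\sum_\nu n_\nu^2(2t+a\tau)=0$ for all $t$, i.e.\ $\sum_\nu n_\nu^2=0$, and then the remaining $2$-form anomaly is exact, recovering the paper's two conditions. With that noted, your proof is correct and its route is a minor repackaging of the paper's.
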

\begin{proof}
  We have the following transformation formulas of theta-functions[7]:
  \begin{equation}
    \theta(t+1,\tau)=-\theta(t,\tau), \ \ \theta(t+\tau,\tau)=-q^{-1/2}e^{-2\pi\mathbf{i}t}\theta(t,\tau),\nonumber
  \end{equation}
  \begin{equation}
    \theta_1(t+1,\tau)=-\theta_1(t,\tau), \ \ \theta_1(t+\tau,\tau)=q^{-1/2}e^{-2\pi\mathbf{i}t}\theta_1(t,\tau),\nonumber
  \end{equation}
  \begin{equation}
    \theta_2(t+1,\tau)=\theta_2(t,\tau), \ \ \theta_2(t+\tau,\tau)=-q^{-1/2}e^{-2\pi\mathbf{i}t}\theta_2(t,\tau),\nonumber
  \end{equation}
  \begin{equation}
    \theta_3(t+1,\tau)=\theta_3(t,\tau), \ \ \theta_3(t+\tau,\tau)=-q^{-1/2}e^{-2\pi\mathbf{i}t}\theta_3(t,\tau),\nonumber
  \end{equation}
where $q$ here is equal to $e^{\pi\mathbf{i}\tau}$.

By reusing the formula in the first column, for $a\in 2\mathbb{Z}$, we can easily verify that $\theta,\theta_1,\theta_2,\theta_3$ terms is unchanged in $\mathcal{L}'$. So (1) is true.

For (2), under the replacement $t\to t+a\tau$ for $a$ an even integer,  we apply the second column formula to get the following formula:
\begin{equation}
  \theta(t+a\tau)=(-1)^ae^{-2\pi\mathbf{i}(at+a^2\tau/2)}\theta(t,\tau),\nonumber
\end{equation}
\begin{equation}
  \theta_1(t+a\tau)=e^{-2\pi\mathbf{i}(at+a^2\tau/2)}\theta_1(t,\tau),\nonumber
\end{equation}
\begin{equation}
  \theta_2(t+a\tau)=(-1)^ae^{-2\pi\mathbf{i}(at+a^2\tau/2)}\theta_2(t,\tau),\nonumber
\end{equation}
\begin{equation}
  \theta_3(t+a\tau)=e^{-2\pi\mathbf{i}(at+a^2\tau/2)}\theta_3(t,\tau).\nonumber
\end{equation}
Then, under the replacement $t\to t+a\tau$ for $a$ an even integer, the following transformation property holds:
\begin{equation}
  \theta_\mu(z_\nu+n_\nu(t+a\tau),\tau)=e^{-2\pi\mathbf{i}a(n_\nu z_\nu+n^2_\nu(t+a^2\tau/2))}\theta_\mu(z_\nu+n_\nu t,\tau),
\end{equation}
where $\theta_\mu\in\{\theta,\theta_1,\theta_2,\theta_3\}$.

By the condition $3p_1(V)_{S^1}=0$ implies that
$$3p_1(V)_{S^1}=p_1(\otimes_3V)_{S^1}=0,$$
then, we have
$$\sum_{\nu=1}^ln_\nu z_\nu=0, \ \ \sum_{\nu=1}^ln^2_\nu=0.$$
Thus, we can see that for each connected component $F_\alpha$ of a fixed locus of $M$, $\mathcal{L}'$ is completely unchanged. (2) certification.
\end{proof}

This lemma tells us that for fixed $\tau$ these $\mathcal{L}'(t,\tau)$ are meromorphic functions on the
torus $\mathbf{C}/2\mathbb{Z}\times2\mathbb{Z}\tau$. Therefore, to get the rigidity we only need to prove that they are holomorphic in $t$. We will actually prove that they are holomorphic in $(t,\tau)$ on $\mathbf{C}\times\mathbf{H}$.

Given
$$g=\left(\begin{array}{cc}
\ a & b  \\
 c  & d
\end{array}\right)\in SL_2(\mathbb{Z})$$
define its modular transformation on $\mathbf{C}\times\mathbf{H}$ by
$$g(t,\tau)=\left(\frac{\tau}{c\tau+d},\frac{a\tau+b}{c\tau+d}\right).$$
This defines a group action. Obviously two generators of $SL_2(\mathbb{Z})$,
$$S=\left(\begin{array}{cc}
\ 0 & -1  \\
 1  & 0
\end{array}\right),~T=\left(\begin{array}{cc}
\ 1 &  1 \\
 0  & 1
\end{array}\right)$$
act by
$$S(t,\tau)=\left(\frac{t}{\tau},-\frac{1}{\tau}\right),~T(t,\tau)=(t,\tau+1).$$
Then,we have the following transformation formulas
\begin{lem}
  (1)If $3p_1(V)_{S^1}=0$, then the action of $S$,
  \begin{equation}
    \mathcal{L}'\left(\frac{t}{\tau},-\frac{1}{\tau}\right)=\tau^{2k}\mathcal{L}'(t,\tau).
  \end{equation}
  (2)The action of $T$, that
  \begin{equation}
    \mathcal{L}'(t,\tau+1)=\mathcal{L}'(t,\tau).
  \end{equation}
\end{lem}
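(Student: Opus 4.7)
The plan is to verify both transformation formulas by substituting the modular $S$- and $T$-transformation laws of the Jacobi theta functions into the explicit expression for $\mathcal{L}'(t,\tau)$ from Proposition 3.2 and bookkeeping the resulting factors. For $(2)$, I apply the identities $\theta(v,\tau+1)=e^{\pi\mathbf{i}/4}\theta(v,\tau)$, $\theta_1(v,\tau+1)=e^{\pi\mathbf{i}/4}\theta_1(v,\tau)$, $\theta_2(v,\tau+1)=\theta_3(v,\tau)$ and $\theta_3(v,\tau+1)=\theta_2(v,\tau)$. In $\mathcal{L}'$ each $\theta_\mu(v,\tau)$ with $v\neq 0$ is paired with $\theta_\mu(0,\tau)$ (directly, or through the Jacobi identity $\theta'(0,\tau)=\pi\theta_1(0,\tau)\theta_2(0,\tau)\theta_3(0,\tau)$), so all phases $e^{\pi\mathbf{i}/4}$ cancel. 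The interchange $\theta_2\leftrightarrow\theta_3$ permutes the summands of $\sum_{\omega=1}^{3}$ and preserves the symmetric product $\theta_1\theta_2\theta_3$ in the $V$-factor, proving $\mathcal{L}'(t,\tau+1)=\mathcal{L}'(t,\tau)$.

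For $(1)$ I use the $S$-formula $\theta(v/\tau,-1/\tau)=(1/\mathbf{i})(-\mathbf{i}\tau)^{1/2}e^{\pi\mathbf{i}v^{2}/\tau}\theta(v,\tau)$ together with the analogues for $\theta_1,\theta_2,\theta_3$, in which $\theta_1\leftrightarrow\theta_2$ are exchanged while $\theta$ and $\theta_3$ are self-dual, each carrying the same multiplier $(-\mathbf{i}\tau)^{1/2}e^{\pi\mathbf{i}v^{2}/\tau}$. To apply these to $\mathcal{L}'(t/\tau,-1/\tau)$, each factor $\theta_\mu(a,-1/\tau)$ with argument $a\in\{y_j,\,x_i+m_it/\tau,\,z_\nu+\eta_\nu t/\tau\}$ is rewritten by setting $v=a\tau$, so that one recovers $\theta$ at parameter $\tau$; nilpotence of the Chern roots lets these be treated as formal power series. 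The scalar bookkeeping then splits cleanly: the universal $(-\mathbf{i}\tau)^{1/2}$ factors cancel between balanced numerator/denominator theta pairs inside $\sum_\omega$ and in the $V$-factor, leaving only the $2s+2r$ prefactor instances of $\theta'(0,-1/\tau)=\chi\tau(-\mathbf{i}\tau)^{1/2}\theta'(0,\tau)$ (obtained by differentiating the $S$-formula in $v$ at $v=0$), contributing exactly $\tau^{2s+2r}=\tau^{2k}$, which matches the manifold dimension $4k=4s+4r$. The $\theta_1\leftrightarrow\theta_2$ swap again only relabels the summands of $\sum_\omega$.

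The main obstacle is the cancellation of the Gaussian exponents $e^{\pi\mathbf{i}v^{2}/\tau}$. Collecting them, the tangential contribution $-\pi\mathbf{i}\tau\sum_j y_j^{2}$ from the prefactor denominator cancels against the identical contribution $+\pi\mathbf{i}\tau\sum_j y_j^{2}$ appearing in every summand of $\sum_\omega$ (and hence factored out of the sum); the normal-direction contributions at $x_i+m_it$ cancel in the same way. The $V$-factor, however, contributes three copies (one per $\mu=1,2,3$) of $\pi\mathbf{i}(z_\nu\tau+\eta_\nu t)^{2}/\tau$ per $\nu$, leaving the surviving total exponent
\[
3\pi\mathbf{i}\sum_{\nu=1}^l\bigl[\tau z_\nu^{2}+2t\,z_\nu\eta_\nu+(t^{2}/\tau)\eta_\nu^{2}\bigr].
\]
The hypothesis $3p_1(V)_{S^{1}}=0$ is precisely the vanishing of $3\sum_\nu(z_\nu+\eta_\nu u)^{2}$ as a polynomial in the equivariant parameter $u$ over each component $F_\alpha$; working in rational coefficients this forces $\sum_\nu z_\nu^{2}=\sum_\nu z_\nu\eta_\nu=\sum_\nu\eta_\nu^{2}=0$, trivializing the Gaussian and yielding $\mathcal{L}'(t/\tau,-1/\tau)=\tau^{2k}\mathcal{L}'(t,\tau)$.
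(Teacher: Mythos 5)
Your proposal follows essentially the same modular-transformation approach as the paper: substitute the $S$- and $T$-laws of the Jacobi theta functions into the explicit Lefschetz number from Proposition~3.2, check that the Gaussian exponents from the tangential and normal directions cancel pairwise between the $\hat{A}$-type prefactor and the $\sum_{\omega}$-factor, and reduce the surviving $V$-direction Gaussian to the equivariant condition $3p_1(V)_{S^1}=0$. The only point worth flagging is that your attribution of the full weight factor $\tau^{2k}=\tau^{2s+2r}$ solely to the $\theta'(0,-1/\tau)$ prefactors hides a small degree-counting step: after the $S$-substitution the theta arguments become $\tau y_j$, $\tau x_i+m_it$, $\tau z_\nu+\eta_\nu t$, and one must observe that the integral over $F_\alpha$ (picking out the $\dim F_\alpha=4s$ component) together with the unrescaled prefactor $\prod_j y_j$ leaves no extra net power of $\tau$; your phrase "nilpotence of the Chern roots lets these be treated as formal power series" gestures at this but does not spell it out, which is the same level of terseness as the paper's own proof, so the argument is acceptable.
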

\begin{proof}
  We have the following transformation laws of Jacobi theta-functions under the actions of $S$ and $T$ (cf. \cite{Ch}):
$$\theta\left(\frac{t}{\tau},-\frac{1}{\tau}\right)=\frac{1}{\mathbf{i}}\sqrt{\frac{\tau}{\mathbf{i}}}
e^{\pi\mathbf{i}t^2/\tau}\theta(t,\tau),~~\theta(t,\tau+1)=e^{\pi\mathbf{i}/4}\theta(t,\tau);$$
$$\theta_1\left(\frac{t}{\tau},-\frac{1}{\tau}\right)=\frac{1}{\mathbf{i}}\sqrt{\frac{\tau}{\mathbf{i}}}
e^{\pi\mathbf{i}t^2/\tau}\theta_2(t,\tau),~~\theta_1(t,\tau+1)=e^{\pi\mathbf{i}/4}\theta_1(t,\tau);$$
$$\theta_2\left(\frac{t}{\tau},-\frac{1}{\tau}\right)=\frac{1}{\mathbf{i}}\sqrt{\frac{\tau}{\mathbf{i}}}
e^{\pi\mathbf{i}t^2/\tau}\theta_1(t,\tau),~~\theta_2(t,\tau+1)=\theta_3(t,\tau);$$
$$\theta_3\left(\frac{t}{\tau},-\frac{1}{\tau}\right)=\frac{1}{\mathbf{i}}\sqrt{\frac{\tau}{\mathbf{i}}}
e^{\pi\mathbf{i}t^2/\tau}\theta_3(t,\tau),~~\theta_3(t,\tau+1)=e^{\pi\mathbf{i}/4}\theta_2(t,\tau);$$
$$\theta'\left(0,-\frac{1}{\tau}\right)=\left(\frac{\tau}{\mathbf{i}}\right)^{3/2}\theta'(0,\tau),
~~\theta'(0,\tau+1)=e^{\pi\mathbf{i}/4}\theta'(0,\tau).$$
Then, we obtain the following transformation formulas,
\begin{equation}
  y_j\frac{\theta'(0,-1/\tau)}{\theta(y_j,-1/\tau)}=e^{-\pi\mathbf{i}\tau y^2_j}\tau y_j\frac{\theta'(0,\tau)}{\theta(\tau y_j,\tau)},
\end{equation}
\begin{equation}
  \frac{\theta'(0,-1/\tau)}{\theta(x_i+m_it/\tau,-1/\tau)}=e^{-\pi\mathbf{i}\tau(x_i+m_it/\tau)^2}\tau \frac{\theta'(0,\tau)}{\theta(\tau x_i+m_it,\tau)},
\end{equation}
\begin{equation}
  \frac{\theta_1(y_j,-1/\tau)}{\theta_1(0,-1/\tau)}=e^{\pi\mathbf{i}\tau y^2_j}\frac{\theta_2(\tau y_j,\tau)}{\theta_2(0,\tau)},
\end{equation}
\begin{equation}
  \frac{\theta_1(x_i+m_it/\tau,-1/\tau)}{\theta_1(0,-1/\tau)}=e^{\pi\mathbf{i}\tau (x_i+m_it/\tau)^2}\frac{\theta_2(\tau x_i+m_it,\tau)}{\theta_2(0,\tau)},
\end{equation}
\begin{equation}
  \frac{\theta_2(y_j,-1/\tau)}{\theta_2(0,-1/\tau)}=e^{\pi\mathbf{i}\tau y^2_j}\frac{\theta_1(\tau y_j,\tau)}{\theta_1(0,\tau)},
\end{equation}
\begin{equation}
  \frac{\theta_2(x_i+m_it/\tau,-1/\tau)}{\theta_2(0,-1/\tau)}=e^{\pi\mathbf{i}\tau (x_i+m_it/\tau)^2}\frac{\theta_1(\tau x_i+m_it,\tau)}{\theta_1(0,\tau)},
\end{equation}
\begin{equation}
  \frac{\theta_3(y_j,-1/\tau)}{\theta_3(0,-1/\tau)}=e^{\pi\mathbf{i}\tau y^2_j}\frac{\theta_3(\tau y_j,\tau)}{\theta_3(0,\tau)},
\end{equation}
\begin{equation}
  \frac{\theta_3(x_i+m_it/\tau,-1/\tau)}{\theta_3(0,-1/\tau)}=e^{\pi\mathbf{i}\tau (x_i+m_it/\tau)^2}\frac{\theta_3(\tau x_i+m_it,\tau)}{\theta_3(0,\tau)},
\end{equation}
\begin{equation}
  \frac{\theta_1(z_\nu+n_\nu t/\tau,-1/\tau)}{\theta_1(0,-1/\tau)}=e^{\pi\mathbf{i}\tau (z_\nu+n_\nu t/\tau)^{2}}\frac{\theta_2(\tau z_\nu+n_\nu t,\tau)}{\theta_2(0,\tau)}.
\end{equation}
Combining the condition $3p_1(V)_{S^1}=0$, and (3.23)-(3.31), we obtain (3.19).

For (2), we use the transformation laws of Jacobi theta-functions under the action of $T$ , which we can easily verify to get (3.20).
\end{proof}

\begin{lem}
  For any function
  $$\mathcal{L}'(t,\tau)$$
  its modular transformation is holomorphic in $(t,\tau)\in\mathbf{R}\times\mathbf{H}$.
\end{lem}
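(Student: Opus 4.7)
The strategy is to use the $q$-series interpretation of $\mathcal{L}'(t,\tau)$ as a character of an infinite-dimensional virtual $S^1$-representation: this bypasses the apparent poles in the theta-function expression of Proposition 3.2 on $\mathbf{R}\times\mathbf{H}$, which must cancel between distinct fixed components.

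First, I would expand the virtual bundle $\Phi$ as a formal $q$-series $\Phi = \sum_{n\geq 0} \Phi^{(n)} q^n$, in which each $\Phi^{(n)}$ is a finite-rank $S^1$-equivariant virtual bundle on $M$. Correspondingly, the Lefschetz number admits an expansion $\mathcal{L}'(t,\tau) = \sum_{n\geq 0} \chi_n(e^{2\pi\mathbf{i}t})\, q^n$, where $\chi_n(u)$ is the $S^1$-character of the equivariant index $\mathrm{ind}_{S^1}(\mathcal{D}\otimes\Phi^{(n)})$. Each $\chi_n$ is a Laurent polynomial in $u$ and hence entire in $t\in\mathbf{C}$.

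Second, I would verify that the series $\sum_n \chi_n(e^{2\pi\mathbf{i}t})\, q^n$ converges uniformly on compact subsets of $\mathbf{C}\times\mathbf{H}$, using the exponential bound $|q|=e^{-2\pi\mathrm{Im}(\tau)}<1$ together with polynomial growth of the virtual dimensions of $\Phi^{(n)}$ (inherited from the convergence properties of the defining theta products). This produces a holomorphic function on $\mathbf{C}\times\mathbf{H}$ that must coincide with the meromorphic formula of Proposition 3.2 on every open set where the latter is regular; consequently the apparent poles at $m_i t \in \mathbb{Z}+\tau\mathbb{Z}$ appearing in the individual fixed-component contributions cancel after summation. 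In particular, $\mathcal{L}'$ is holomorphic on $\mathbf{R}\times\mathbf{H}$.

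Finally, for the modular transforms, Lemma 3.4 shows that under the actions of $S$ and $T$ (and hence under any $g\in SL_2(\mathbb{Z})$) the function $\mathcal{L}'$ is carried to itself up to multiplication by a factor that is holomorphic (indeed polynomial) in $\tau\in\mathbf{H}$. Every modular transform therefore inherits the holomorphicity established in the previous step. The main obstacle will be Step 2, namely making precise the identification between the manifestly entire $q$-character series and the meromorphic theta-function expression, with suitable convergence control uniformly on compacta; once this identification is available, the cancellation of fixed-point-wise poles and the extension to modular transforms follow essentially at once.
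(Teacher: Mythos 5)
Your opening idea, the $q$-series character interpretation of $\mathcal{L}'(t,\tau)$, is the right one and is exactly the engine behind Liu's Lemma~2.3 in~\cite{Li2}, which the paper cites. However, Step~2 as stated contains a genuine error that cannot be left as written.

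You claim the series $\sum_n \chi_n(e^{2\pi\mathbf{i}t})\,q^n$ converges uniformly on compact subsets of $\mathbf{C}\times\mathbf{H}$, producing a holomorphic function on \emph{all} of $\mathbf{C}\times\mathbf{H}$. If that were true, $\mathcal{L}'(t,\tau)$ would be entire in $t$ for each $\tau$, so (after Lemma~3.3) it would be an elliptic function with no poles, hence constant in $t$; the rigidity theorem would follow immediately, the entire $SL_2(\mathbb{Z})$-transformation machinery would be unnecessary, and --- fatally --- the hypothesis $3p_1(V)_{S^1}=0$ would never be invoked. That is too strong: the Lefschetz-number expression of Proposition~3.2 really does have poles off the real $t$-axis (at points where $\theta(x_i+m_it,\tau)$ vanishes), and these are precisely what the modular argument is designed to handle. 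The obstruction to your claim is the growth of the $S^1$-weights, not the dimensions: even if $\dim\Phi^{(n)}$ grows only polynomially, the top weight occurring in $\Phi^{(n)}$ grows linearly in $n$, so that $\chi_n(u)$ is a Laurent polynomial in $u=e^{2\pi\mathbf{i}t}$ of degree $O(n)$. For $\mathrm{Im}(t)=\epsilon\neq 0$ this produces a factor of size roughly $e^{2\pi C\epsilon n}$, and $\sum_n \chi_n(e^{2\pi\mathbf{i}t})q^n$ then converges only when $|\epsilon|$ is small compared to $\mathrm{Im}(\tau)$. The correct conclusion is that the series converges and defines a holomorphic function on a neighbourhood of $\mathbf{R}\times\mathbf{H}$ inside $\mathbf{C}\times\mathbf{H}$ (roughly $\{|\mathrm{Im}(t)|<c\,\mathrm{Im}(\tau)\}$), which is exactly what the lemma requires and no more.

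For Step~3, transporting holomorphicity from $\mathcal{L}'(t,\tau)$ to $\mathcal{L}'(g(t,\tau))$ by iterating Lemma~3.4 is legitimate in the context of Theorem~3.1, but note that Lemma~3.4(1) uses $3p_1(V)_{S^1}=0$; your proof therefore silently carries the same hypothesis. The paper's own proof defers to~\cite[Lemma~2.3]{Li2}, whose argument is carried out directly at the level of the fixed-point formula (Proposition~3.2) and the Jacobi theta transformation laws rather than by appealing to the already-assembled modular identity of Lemma~3.4; the two routes rest on the same theta-function computations, but you should be aware that your presentation makes Lemma~3.5 logically dependent on Lemma~3.4, whereas the paper treats them as parallel analytic inputs to the proof of Theorem~3.1. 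This is a presentational difference rather than a gap, but the convergence claim in Step~2 must be corrected before the argument closes.
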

\begin{proof}
The proof is almost the same as the proof of [12, Lemma 2.3] except that we use Proposition 3.2 instead of the corresponding Lefschetz fixed point formulas.
\end{proof}

$Proof~of~Theorem~3.1$. We prove that these $\mathcal{F}_\lambda$ are holomorphic on $\mathbf{C}\times\mathbf{H}$ which implies the rigidity of Theorem 3.1. By (3.11), we see that
the possible poles of $\mathcal{L}'(t,\tau)$ can be written in the form $t=\frac{k}{l}(c\tau+d)$ for
integers $k,l,c,d$  with $(c, d)=1$.

We can always find integers $a,b$ such that $ad-bc=1$. Then the matrix $g_0=\left(\begin{array}{cc}
\ d & -b  \\
 -c  & a
\end{array}\right)\in SL_2(\mathbb{Z})$ induces an action
$$\mathcal{L}'(g_0(t,\tau))=\mathcal{F}\left(\frac{t}{-c\tau+a},\frac{d\tau-b}{-c\tau+a}\right).$$
Now, if $t=\frac{k}{l}(c\tau+d)$ is a polar divisor of $\mathcal{F}(t,\tau)$, then one polar divisor of
$\mathcal{L}'(g_0(t,\tau))$ is given by
$$\frac{t}{-c\tau+a}=\frac{k}{l}\left(c\frac{d\tau-b}{-c\tau+a}+d\right)$$
which gives $t=\frac{k}{l}$.

By Lemma 3.4,  we know that up to some constant, $\mathcal{L}'(g_0(t,\tau))$ is still one of $\{\mathcal{L}'(t,\tau)\}$. This contradicts Lemma 3.5, therefore, this completes the
proof of Theorem 3.1.

\section{Twisted Toplitz operator and Witten rigidity theorem in odd dimensions}
We recall the odd Chern character  of a smooth map g from M to the general linear group $GL(N,\mathbf{C})$
with $N$ a  positive integer (see \cite{HY}). Let $d$ denote a trivial connection on $\mathbf{C}^{N}|_{M}$. We will denote by $c_g(M,[g])$ the cohomology class associated to the closed $n$-form
\begin{equation}
  c_n(\mathbf{C}^{N}|_M,g,d)=\left(\frac{1}{2\pi\sqrt{-1}}\right)^{\frac{(n+1)}{2}}\mathrm{Tr}[(g^{-1}dg)^n].
\end{equation}
The odd Chern character form ${\rm ch}(\mathbf{C}^{N}|_M,g,d)$ associated to $g$ and $d$ by definition is
\begin{equation}
  {\rm ch}(\mathbf{C}^{N}|_M,g,d)=\sum^{\infty}_{n=1}\frac{n!}{(2n+1)!}c_{2n+1}((\mathbf{C}^{N}|_M,g,d)).
\end{equation}
Let the connection $\nabla_{u}$ on the trivial bundle $\mathbf{C}^{N}|_M$ defined by
\begin{equation}
  \nabla_u=(1-u)d+ug^{-1}\cdot d \cdot g,\ \ u\in[0,1].
\end{equation}
Then we have
\begin{equation}
  d{\rm ch}(\mathbf{C}^{N}|_M,g,d)={\rm ch}(\mathbf{C}^{N}|_M,d)-{\rm ch}(\mathbf{C}^{N}|_M,g^{-1}\cdot d\cdot g).
\end{equation}

Now let $g:M\to SO(N)$ and we assume that $N$ is even and large enough. Let $E$ denote the trivial real vector bundle of rank $N$ over $M$. We equip $E$ with the canonical trivial metric and trivial connection $d$. Set
$$\nabla_u=d+ug^{-1}dg,\ \ u\in[0,1].$$
Let $R_u$ be the curvature of $\nabla_u$, then
\begin{equation}
  R_u=(u^2-u)(g^{-1}dg)^2.
\end{equation}
We also consider the complexification of $E$ and $g$ extends to a unitary automorphism of $E_{\mathbf{C}}$. The connection $\nabla_u$ extends to a Hermitian connection on $E_{\mathbf{C}}$ with curvature still given by (3.6). Let $\Delta(E)$ be the spinor bundle of $E$, which is a trivial Hermitian
bundle of rank $2^{\frac{N}{2}}$. We assume that $g$ has a lift to the Spin group ${\rm Spin}(N):g^{\Delta}:M\to {\rm Spin}(N)$. So $g^{\Delta}$ can be viewed as an automorphism of $\Delta(E)$ preserving the Hermitian metric. We lift $d$ on $E$ to be a trivial Hermitian connection $d^{\Delta}$ on $\Delta(E)$, then
\begin{equation}
  \nabla_u^{\Delta}=(1-u)d^{\Delta}+u(g^{\Delta})^{-1}\cdot d^{\Delta} \cdot g^{\Delta},\ \ u\in[0,1]
\end{equation}
lifts $\nabla_u$ on $E$ to $\Delta(E)$. Let $Q_j(E),j=1,2,3$ be the virtual bundles defined as follows:
\begin{equation}
Q_1(E)=\triangle(E)\otimes
   \bigotimes _{n=1}^{\infty}\wedge_{q^n}(\widetilde{E_C});
\end{equation}
\begin{equation}
Q_2(E)=\bigotimes _{n=1}^{\infty}\wedge_{-q^{n-\frac{1}{2}}}(\widetilde{E_C});
\end{equation}
\begin{equation}
Q_3(E)=\bigotimes _{n=1}^{\infty}\wedge_{q^{n-\frac{1}{2}}}(\widetilde{E_C}).
\end{equation}
Let $g$ on $E$ have a lift $g^{Q(E)}$ on $Q(E)$ and $\nabla_u$ have a lift $\nabla^{Q(E)}_u$ on $Q(E)$. Following \cite{HY}, we defined ${\rm ch}(Q_j(E),g^{Q_j(E)},d,\tau)$ for $j=1,2,3$ as following
\begin{equation}
{\rm ch}(Q_j(E),\nabla^{Q_j(E)}_0,\tau)-{\rm ch}(Q_j(E),\nabla^{Q_j(E)}_1,\tau)=d{\rm ch}(Q_j(E),g^{Q_j(E)},d,\tau),
\end{equation}
where
\begin{equation}
{\rm ch}(Q_1(E),g^{Q_1(E)},d,\tau)=-\frac{2^{N/2}}{8\pi^2}\int^1_0{\rm Tr}\left[g^{-1}dg\frac{\theta'_1(R_u/(4\pi^2),\tau)}{\theta_1(R_u/(4\pi^2),\tau)}\right]du,
\end{equation}
and for $j=2,3$
\begin{equation}
{\rm ch}(Q_j(E),g^{Q_j(E)},d,\tau)=-\frac{1}{8\pi^2}\int^1_0{\rm Tr}\left[g^{-1}dg\frac{\theta'_j(R_u/(4\pi^2),\tau)}{\theta_j(R_u/(4\pi^2),\tau)}\right]du.
\end{equation}

Let $M$ be a $(4k-1)$-dimensional spin manifold and $\triangle(M)$ be the spinor bundle. Let $\widetilde{T_{\mathbf{C}}M}=T_{\mathbf{C}}M-\dim M$.
Set
\begin{equation}
  \Psi_1(M,V,E)=\Phi_0\otimes\left(\Delta(V)\otimes
  \bigotimes_{n=1}^{\infty}\Lambda_{q^n}(\widetilde{V_{\mathbf{C}}})\right)\otimes(Q_1(E),g^{Q_1(E)}),
\end{equation}
\begin{equation}
  \Psi_2(M,V,E)=\Phi_0\otimes\left(
  \bigotimes_{n=1}^{\infty}\Lambda_{-q^{n-\frac{1}{2}}}(\widetilde{V_{\mathbf{C}}})\right)\otimes
  (Q_2(E),g^{Q_2(E)}),
\end{equation}
\begin{equation}
  \Psi_3(M,V,E)=\Phi_0\otimes\left(
  \bigotimes_{n=1}^{\infty}\Lambda_{q^{n-\frac{1}{2}}}(\widetilde{V_{\mathbf{C}}})\right)\otimes
  (Q_3(E),g^{Q_3(E)}).
\end{equation}

Let $M$ be an odd dimensional closed smooth spin Riemannian manifold which admits a circle action.
Let $V$ be an $S^1$-equivariant complex vector bundle over $M$ carrying an $S^1$-invariant Hermitian connection. In addition, we assume $g:M\to GL(N,\mathbf{C})$ is $S^1$-invariant, i.e.,
\begin{equation}
  g(hx)=g(x), \ for \ any \ h\in S^1 \ and \ x\in M.
\end{equation}
Thus the twisted Toeplitz operator $\mathcal{T}\otimes\Psi_i$,~$i=1,2,3$ is $S^1$-equivariant.

We consider the fixed point case. Similarly, we have the following $S^1$-equivariant decomposition when restricted upon $F_{\alpha}$,
\begin{equation}
  TM|_{F_{\alpha}}=\mathbf{N}_1\oplus\cdots\oplus \mathbf{N}_{2r}\oplus TF_{\alpha},
\end{equation}
where each $\mathbf{N}_{\beta},$~$\beta=1,\cdots,2r$ is a complex vector bundle such that $h\in S^1$ acts on it
by $e^{2\pi \mathbf{i}m_it}$. We assume $\{\pm2\pi\mathbf{i}x_i, \ 1\leq i\leq2r\}$ be the Chern roots of $\mathbf{N}_{\beta}\otimes\mathbf{C}$. Let $\{\pm2\pi\mathbf{i}y_j, \ 1\leq j\leq2s\}$ be the Chern roots of $TF_{\alpha}\otimes\mathbf{C}$.
And let
\begin{equation}
  V|_{F_\alpha}=V_1\oplus\cdots\oplus V_l\oplus V^{\mathbb{R}}_0
\end{equation}
be the $S^1$-equivariant decomposition of the restrictions of $V$ over $F_\alpha$, where $V_\nu$ is a complex vector bundle and $V^{\mathbb{R}}_0$ is the real subbundle of $V|_{F_\alpha}$. Assume that $h\in S^1$ acts on $V_\nu$ by $e^{2\pi \mathbf{i}n_\nu t}$. And let $\{\pm2\pi\mathbf{i}z_\nu, \ 1\leq\nu\leq l\}$ be the Chern roots of $V_\nu\otimes\mathbf{C}$. Let $\{\pm2\pi\mathbf{i}z^0_\nu\}$ be the Chern roots of $V^{\mathbb{R}}_0\otimes\mathbf{C}$.

We suppose $p_1(\cdot)_{S^1}$ denote the first $S^1$-equivariant pontrjagin class, then we have the following rigidity theorems.
\begin{thm}
  For an odd dimensional connected spin manifold with non-trivial $S^1$-action, if $p_1(V)_{S^1}=0$ and $c_3(E,g,d)=0$, then the Toeplitz operators $\mathcal{T}\otimes\Psi_i,~~i=1,2,3$ are rigid.
\end{thm}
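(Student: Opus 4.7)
The plan is to mirror the argument of Theorem 3.1, but now using the Han--Yu equivariant Lefschetz formula for Toeplitz operators in \cite{HY} in place of the Atiyah--Bott--Lefschetz fixed point formula. As a first step I would compute, for each $i\in\{1,2,3\}$ and each generator $g=e^{2\pi\mathbf{i}t}$, the equivariant Lefschetz number $\mathcal{L}_i(g;\tau)$ of $\mathcal{T}\otimes\Psi_i$ as an integral over the fixed submanifolds $F_\alpha$. The contribution of $\Phi_0$ together with $\Delta(V)\otimes Q_1(V),Q_2(V),Q_3(V)$ is exactly the factor appearing in Proposition 3.2 (with the appropriate $\theta_\mu$ selected by the index $i$), while the $(Q_j(E),g^{Q_j(E)})$ piece contributes the odd Chern--Simons type form built from $\theta'_j/\theta_j$ as in (4.11)--(4.12), restricted to $F_\alpha$ and evaluated with the $S^1$-invariant map $g$. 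Extending each $\mathcal{L}_i(g;\tau)$ to a meromorphic function $\mathcal{L}'_i(t,\tau)$ on $\mathbf{C}\times\mathbf{H}$ by analytic continuation, as in Section 3, reduces the theorem to showing that each $\mathcal{L}'_i$ is in fact holomorphic (hence constant) in $t$.

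Next I would establish the two double-periodicity statements analogous to Lemma 3.3. Periodicity $t\mapsto t+a$, $a\in 2\mathbb{Z}$, is immediate from the first column of theta transformations. For $t\mapsto t+a\tau$ the exponential factors produced by the quasi-periodicity of the $\theta_\mu$ combine into $\exp\bigl(-2\pi\mathbf{i}a\bigl(\sum_\nu n_\nu z_\nu + \tfrac{a}{2}\sum_\nu n_\nu^2\,\tau\bigr)\bigr)$ coming from the $V$-factor, which vanishes under $p_1(V)_{S^1}=0$ (so $\sum n_\nu z_\nu=0$ and $\sum n_\nu^2=0$), while the $E$-factor produces an extra exponential that, after expanding $\theta'_j/\theta_j$, depends on $g$ only through the odd Chern form $c_3(E,g,d)$; the hypothesis $c_3(E,g,d)=0$ kills precisely this obstruction. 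This is the one new ingredient compared to Theorem~3.1 and is where I expect the main technical obstacle to lie: one has to track carefully how the $(u,R_u)$--integral in (4.11)--(4.12) transforms under $t\mapsto t+a\tau$ and identify the resulting defect as a multiple of $c_3(E,g,d)$.

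With double periodicity in hand, I would prove the modular transformation laws for $\mathcal{L}'_i$ under $S$ and $T$ as in Lemma 3.4: using the standard $SL_2(\mathbf{Z})$ transformation laws of the $\theta_\mu$, the generator $T$ preserves the set $\{\mathcal{L}'_1,\mathcal{L}'_2,\mathcal{L}'_3\}$ up to permutation and a constant, while $S$ sends $\mathcal{L}'_i(t/\tau,-1/\tau)$ to $\tau^{k}\mathcal{L}'_{\sigma(i)}(t,\tau)$ for some permutation $\sigma$ and appropriate weight $k$; the quadratic exponents produced by $\theta_\mu(t/\tau,-1/\tau)$ cancel exactly because of $p_1(V)_{S^1}=0$ on the $V$-side and $c_3(E,g,d)=0$ on the $E$-side, together with the identity $\sum_j y_j^2 + \sum_i (x_i+m_it)^2 = p_1(TM)_{S^1}|_{F_\alpha}=0$ coming from the first equivariant Pontryagin class of $TM$ (which holds at the fixed locus).

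Finally, the holomorphy on $\mathbf{R}\times\mathbf{H}$ (the analogue of Lemma~3.5) follows from Proposition~3.2 together with the corresponding odd-dimensional fixed point computation for $\mathcal{T}\otimes\Psi_i$, since for real $t$ the denominators $\theta(x_i+m_it,\tau)$ cannot vanish. The pole-removal argument is then identical to the one after Lemma~3.5: any candidate pole $t=\tfrac{k}{l}(c\tau+d)$ with $(c,d)=1$ is transported to $t=k/l\in\mathbf{R}$ by the $SL_2(\mathbf{Z})$ element $g_0=\bigl(\begin{smallmatrix}d&-b\\-c&a\end{smallmatrix}\bigr)$; by the modular law, $\mathcal{L}'_i(g_0(t,\tau))$ is, up to a constant and a permutation of the index $i$, one of the $\mathcal{L}'_j(t,\tau)$, contradicting the holomorphy on $\mathbf{R}\times\mathbf{H}$. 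Hence no poles exist, each $\mathcal{L}'_i$ is holomorphic and doubly periodic on the torus, so constant in $t$, which is the rigidity of $\mathcal{T}\otimes\Psi_i$.
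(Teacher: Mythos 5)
Your overall architecture mirrors the paper's exactly: compute the Lefschetz numbers via the Han--Yu Lefschetz formula, establish double periodicity in $t$, establish the $S$- and $T$-transformation laws, prove holomorphy on $\mathbf{R}\times\mathbf{H}$, and finish with the pole-transport argument. However, there is a genuine conceptual error in where you locate the role of the hypothesis $c_3(E,g,d)=0$, and a secondary error in the transformation of the $TM$-part.

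Your proposed Lemma~3.3 analogue attributes the need for $c_3(E,g,d)=0$ to the double periodicity $t\mapsto t+a\tau$, and you flag as the main technical obstacle ``how the $(u,R_u)$--integral in (4.11)--(4.12) transforms under $t\mapsto t+a\tau$.'' This is a dead end: the map $g$ is $S^1$-invariant (equation (4.16)), so ${\rm ch}(Q_j(E),g^{Q_j(E)},d,\tau)$ restricted to the fixed locus $F_\alpha$ is a function of $\tau$ alone and carries no $t$-dependence whatsoever. Under $t\mapsto t+a\tau$ it is unchanged; there is no ``extra exponential,'' no defect to cancel, and nothing to track. That is precisely why the paper's Lemma 4.3 assumes only $p_1(V)_{S^1}=0$. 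The hypothesis $c_3(E,g,d)=0$ enters only in the analogue of Lemma 3.4, i.e.\ the action of $S$: under $\tau\mapsto -1/\tau$ the odd Chern--Simons forms obey the transformation laws (4.28)--(4.30) proved by Han--Yu (their Proposition 2.2), and it is exactly in that proposition that $c_3(E,g,d)=0$ is used. So the new ingredient relative to Theorem~3.1 is the use of (4.28)--(4.30) in the $S$-transformation, not a $t$-periodicity defect.

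Two further minor corrections. First, in the $S$-transformation you invoke ``$\sum_j y_j^2 + \sum_i(x_i+m_it)^2 = p_1(TM)_{S^1}\vert_{F_\alpha}=0$'' to kill the $TM$-quadratic exponentials; this is neither a hypothesis of the theorem nor true in general. As equations (3.23)--(3.30) show, the exponential $e^{-\pi\mathbf{i}\tau y_j^2}$ produced by the $\widehat{A}$-factor $\theta'/\theta$ is exactly cancelled by the $e^{+\pi\mathbf{i}\tau y_j^2}$ from the elliptic-genus factor $\theta_\omega/\theta_\omega(0)$, and likewise for the normal variables $x_i+m_it$; the cancellation for the $TM$-part is internal and requires no Pontryagin-class vanishing. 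Second, the holomorphy on $\mathbf{R}\times\mathbf{H}$ is not simply that ``the denominators $\theta(x_i+m_it,\tau)$ cannot vanish for real $t$''; they do vanish at rational $t$. The correct argument (the one the paper delegates to Han--Yu's Lemma 2.3) uses the topological interpretation of the Lefschetz numbers and a limiting argument near such points. With these adjustments the proof runs as the paper's does.
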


First, we calculate the corresponding Lefschetz numbers.
\begin{prop}
 The Lefschetz numbers $\mathcal{T}\otimes\Psi_i$,~$i=1,2,3$ are
\begin{equation}
  \begin{split}
  \mathcal{L}_1(g;\tau)&=2^{2s+2r+al}\left(\frac{1}{\pi}\right)^{2r}\sum^{2s}_{\alpha=1}
  \int_{F_\alpha}\prod^{2s}_{j=1}
  \left(y_j\frac{\theta'(0,\tau)}{\theta(y_j,\tau)}\right)\\
  &\cdot\prod^{2r}_{i=1}
  \left(\frac{\theta'(0,\tau)}{\theta(x_i+m_it,\tau)}\right)
  \sum_{\omega=1}^3\left(\prod^{2s}_{j=1}
\frac{\theta_\omega(y_j,\tau)}{\theta_\omega(0,\tau)}\prod^{2r}_{i=1}
 \frac{\theta_\omega(x_i+m_it,\tau)}{\theta_\omega(0,\tau)}\right)\\
 &\cdot\prod_{\nu=1}^l\frac{\theta_1^a(
 z_\nu+\eta_\nu t,\tau)}{\theta_1^a(0,\tau)}{\rm ch}(Q_1(E),g^{Q_1(E)},d,\tau),
  \end{split}
\end{equation}
\begin{equation}
  \begin{split}
  \mathcal{L}_2(g;\tau)&=2^{2s+2r}\left(\frac{1}{\pi}\right)^{2r}\sum^{2s}_{\alpha=1}
  \int_{F_\alpha}\prod^{2s}_{j=1}
  \left(y_j\frac{\theta'(0,\tau)}{\theta(y_j,\tau)}\right)\\
  &\cdot\prod^{2r}_{i=1}
  \left(\frac{\theta'(0,\tau)}{\theta(x_i+m_it,\tau)}\right)
  \sum_{\omega=1}^3\left(\prod^{2s}_{j=1}
\frac{\theta_\omega(y_j,\tau)}{\theta_\omega(0,\tau)}\prod^{2r}_{i=1}
 \frac{\theta_\omega(x_i+m_it,\tau)}{\theta_\omega(0,\tau)}\right)\\
 &\cdot\prod_{\nu=1}^l\frac{\theta_2^b(
 z_\nu+\eta_\nu t,\tau)}{\theta_2^b(0,\tau)}{\rm ch}(Q_2(E),g^{Q_2(E)},d,\tau),
  \end{split}
\end{equation}
and
\begin{equation}
  \begin{split}
  \mathcal{L}_3(g;\tau)&=2^{2s+2r}\left(\frac{1}{\pi}\right)^{2r}\sum^{2s}_{\alpha=1}
  \int_{F_\alpha}\prod^{2s}_{j=1}
  \left(y_j\frac{\theta'(0,\tau)}{\theta(y_j,\tau)}\right)\\
  &\cdot\prod^{2r}_{i=1}
  \left(\frac{\theta'(0,\tau)}{\theta(x_i+m_it,\tau)}\right)
  \sum_{\omega=1}^3\left(\prod^{2s}_{j=1}
\frac{\theta_\omega(y_j,\tau)}{\theta_\omega(0,\tau)}\prod^{2r}_{i=1}
 \frac{\theta_\omega(x_i+m_it,\tau)}{\theta_\omega(0,\tau)}\right)\\
 &\cdot\prod_{\nu=1}^l\frac{\theta_3^c(
 z_\nu+\eta_\nu t,\tau)}{\theta_3^c(0,\tau)}{\rm ch}(Q_3(E),g^{Q_3(E)},d,\tau).
  \end{split}
\end{equation}
\end{prop}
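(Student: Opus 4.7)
The plan is to apply the odd-dimensional Lefschetz-type fixed-point formula for equivariant twisted Toeplitz operators established by Han and Yu in \cite{HY}, and then to reduce the evaluation of each resulting integrand to the computations already carried out in Proposition 3.2, augmented by the odd Chern character form of the automorphism $g^{Q_i(E)}$. More precisely, the Han-Yu formula expresses $\mathcal{L}_i(g;\tau)$ as a sum over connected components $F_\alpha$ of the fixed point set of integrals whose integrand is the product of three factors: the equivariant $\widehat{A}$-class of $TF_\alpha$ together with the equivariant Pfaffian-type contribution from the normal bundle $\mathbf{N}$; the equivariant Chern character of the ``twisting'' bundle $\Phi_0\otimes(V\text{-piece})$; and the odd Chern character form $\mathrm{ch}(Q_i(E),g^{Q_i(E)},d,\tau)$ from (4.11)--(4.12) restricted to $F_\alpha$.

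For the first two factors I would recycle the identities proved in (3.13)--(3.17). Indeed, the $S^1$-equivariant decompositions (4.17)--(4.18) of $TM|_{F_\alpha}$ and $V|_{F_\alpha}$ are formally identical to those in Section 3, with the same Chern roots $\pm 2\pi\mathbf{i}x_i$, $\pm 2\pi\mathbf{i}y_j$ and equivariant weights $m_i t$. Hence the product of (3.13) and (3.14) reproduces verbatim the $y_j$ and $x_i+m_i t$ factors appearing in the first two lines of (4.19)--(4.21), including the constant $2^{2s+2r}$ and the sum $\sum_{\omega=1}^{3}$ coming from $\Phi_0$. For the $V$-piece, (3.15) produces $2^l\prod_\nu \theta_1(z_\nu+n_\nu t,\tau)/\theta_1(0,\tau)$ for $\Psi_1$, while (3.16) and (3.17) produce the analogous $\theta_2$ and $\theta_3$ factors for $\Psi_2$ and $\Psi_3$, accounting for every remaining theta factor in (4.19)--(4.21).

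The last factor $\mathrm{ch}(Q_i(E),g^{Q_i(E)},d,\tau)$ arises directly from the Toeplitz side of the Han-Yu formula. The crucial observation here is that, by assumption (4.16), the map $g\colon M\to GL(N,\mathbf{C})$ is $S^1$-invariant, so when restricted to $F_\alpha$ all three of $g$, the interpolating connection $\nabla_u$, and the lifted automorphism $g^{Q_i(E)}$ are strictly invariant under the circle action. In particular, no equivariant weight $e^{2\pi\mathbf{i}(\cdot)t}$ is generated on the $E$-side, so $\mathrm{ch}(Q_i(E),g^{Q_i(E)},d,\tau)$ enters the integrand as a genuine $t$-free multiplicative factor, exactly as displayed.

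The main obstacle will be justifying the clean product structure of the Han-Yu fixed-point formula for the triple tensor product $\Phi_0\otimes(V\text{-piece})\otimes Q_i(E)$. The formula in \cite{HY} is stated for a single twisting bundle paired with a single odd ``symbol''-type piece coming from $g$; to accommodate the extra $\Phi_0\otimes(V\text{-piece})$ layer I would expand each $\Theta_j$ and $\Lambda_{\bullet}(\widetilde{V_{\mathbf C}})$ and $Q_j(E)$ as a formal power series in $q$, apply the Han-Yu formula coefficient-by-coefficient to reduce to the classical finite-rank Atiyah-Bott situation, and then re-sum; the $S^1$-invariance of $g$ ensures the odd Chern character factor commutes through the expansion unchanged. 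Once this bookkeeping is verified, assembling the three factors delivers (4.19)--(4.21).
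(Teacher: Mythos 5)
Your proposal matches the paper's (very terse) proof, which simply says that the computation proceeds as in Proposition 3.2 via the Lefschetz fixed-point formula together with the odd Chern character forms (4.11)--(4.12). You fill in exactly the details the paper elides: recycling the equivariant $\widehat{A}$/Pfaffian and $V$-twist factors from (3.13)--(3.17), observing that the $S^1$-invariance of $g$ makes $\mathrm{ch}(Q_i(E),g^{Q_i(E)},d,\tau)$ a $t$-free multiplicative factor, and noting (correctly) that the formal $q$-expansion plus coefficient-by-coefficient application of the Han--Yu formula is the bookkeeping step that legitimizes the product structure for the triple tensor product.
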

\begin{proof}
Similar to Proposition 3.2, we can calculate equations (4.19), (4.20) and (4.21) by using Lefschetz fixed point formula, (4.11) and (4.12).
\end{proof}

Next, let us view the above expression as defining a function $\mathcal{F}_\lambda(t,\tau)$, for each $\lambda\in\{1,2,3\}$, i.e. $\mathcal{F}_\lambda(t,\tau)=\mathcal{L}_\lambda(g;\tau)$.
The expression for $\mathcal{F}_\lambda(t,\tau)$ involves only complex-differentiable functions, so we can extend its domain to every complex number $t$ and choice of $\tau$ in the open upper half-plane, i.e. $\mathcal{F}_\lambda(t,\tau)$ where the expression exists in $\mathbf{C}\times\mathbf{H}$. The Witten rigidity theorems are equivalent to that these $\mathcal{F}_\lambda(t,\tau)$ are independent of $t$.  Then, we have the following lemma.
\begin{lem}
Let $(t,\tau)\in\mathbf{C}\times\mathbf{H}$ be in the domain of $\mathcal{F}_\lambda(t,\tau)$,~$\lambda\in\{1,2,3\}$.\\
(1)Then $\mathcal{F}_\lambda(t,\tau)=\mathcal{F}_\lambda(t+a,\tau)$ for any $a\in 2\mathbb{Z}$.\\
(2)If $p_1(V)_{S^1}=0$, then $\mathcal{F}_\lambda(t,\tau)=\mathcal{F}_\lambda(t+a\tau,\tau)$ for any $a\in 2\mathbb{Z}$.
\end{lem}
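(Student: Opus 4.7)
The plan is to follow the same strategy as the proof of Lemma 3.3: exploit the $1$- and $\tau$-periodicities of the Jacobi theta functions, and use the hypothesis $p_1(V)_{S^1}=0$ to kill the residual exponential factor coming from the $V$-contribution. The new ingredient in the odd-dimensional setting is the odd Chern character factor ${\rm ch}(Q_\lambda(E),g^{Q_\lambda(E)},d,\tau)$ appearing in $\mathcal{F}_\lambda(t,\tau)$. Since $g$ is assumed $S^1$-invariant and $E$ is trivial, this factor carries no $t$-dependence and therefore plays no role in either statement; I will concentrate on the theta-function factors of $\mathcal{L}_\lambda$.

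For part (1), under $t\mapsto t+a$ with $a\in 2\mathbb{Z}$ the arguments $x_i+m_it$ and $z_\nu+n_\nu t$ shift by the even integers $m_ia$ and $n_\nu a$, and the identities $\theta(v+1,\tau)=-\theta(v,\tau)$, $\theta_1(v+1,\tau)=-\theta_1(v,\tau)$, $\theta_2(v+1,\tau)=\theta_2(v,\tau)$, $\theta_3(v+1,\tau)=\theta_3(v,\tau)$ produce an even number of sign changes, which cancel. Each theta factor returns to itself, giving (1) immediately.

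For part (2), I will iterate the $\tau$-quasi-periodicity formulas recorded in the proof of Lemma 3.3. For every $\theta_\mu\in\{\theta,\theta_1,\theta_2,\theta_3\}$, each argument $v$, and every $a\in 2\mathbb{Z}$,
\[
\theta_\mu(v+a\tau,\tau)=e^{-2\pi\mathbf{i}a(v+a\tau/2)}\theta_\mu(v,\tau),
\]
the parity sign being absorbed because $a$ is even. In the normal-bundle part of $\mathcal{L}_\lambda$, the resulting factor $\exp[-2\pi\mathbf{i}a(m_ix_i+m_i^2t+m_i^2a\tau/2)]$ appears identically in the denominator $\theta(x_i+m_it,\tau)$ and in every numerator $\theta_\omega(x_i+m_it,\tau)$, so it cancels summand-by-summand over $\omega\in\{1,2,3\}$, exactly as in Lemma 3.3. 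What survives is only the residual factor coming from the $V$-part,
\[
\exp\!\Big(-2\pi\mathbf{i}a\sum_{\nu=1}^{l}\bigl(n_\nu z_\nu+n_\nu^2t+n_\nu^2a\tau/2\bigr)\Big).
\]
The hypothesis $p_1(V)_{S^1}=0$, restricted to $F_\alpha$, expands as $\sum_\nu(z_\nu+n_\nu t)^2=0$; reading off the coefficients of $t$ and $t^2$ yields $\sum_\nu n_\nu z_\nu=0$ and $\sum_\nu n_\nu^2=0$ on $F_\alpha$. Hence the residual exponential equals $1$, the integrand over each $F_\alpha$ is unchanged, and (2) follows.

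The main subtlety, and the conceptual reason that $p_1(V)_{S^1}=0$ suffices here whereas Lemma 3.3 required $3p_1(V)_{S^1}=0$, is that each $\Psi_\lambda$ involves only one of the theta types $\theta_1,\theta_2,\theta_3$ on the bundle $V$, rather than the product of all three. Consequently only a single copy of each of the constraints $\sum n_\nu z_\nu=0$ and $\sum n_\nu^2=0$ appears in the residual exponential, and one power of $p_1(V)_{S^1}$ is enough to enforce them.
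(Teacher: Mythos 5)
Your proof is correct and takes essentially the same route the paper does: the paper's own proof of Lemma 4.3 consists of the single line ``Similar to Lemma 3.3,'' and your argument simply fills in what that reference entails. Your observation that the odd Chern character factor ${\rm ch}(Q_\lambda(E),g^{Q_\lambda(E)},d,\tau)$ carries no $t$-dependence (because $g$ is $S^1$-invariant and $E$ trivial) is exactly the additional point one needs to reduce the lemma to the transformation laws of Lemma 3.3, and the subsequent cancellation over $\omega\in\{1,2,3\}$ in the normal-bundle factors and the extraction of $\sum_\nu n_\nu z_\nu=0$, $\sum_\nu n_\nu^2=0$ from $p_1(V)_{S^1}|_{F_\alpha}=\sum_\nu(z_\nu+n_\nu t)^2+\cdots=0$ are the right steps.

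One small imprecision in your closing paragraph: the conditions $p_1(V)_{S^1}=0$ and $3p_1(V)_{S^1}=0$ are actually equivalent (multiplication by $3$ is invertible in the rational equivariant cohomology where these Pontryagin classes are read), so it is not that the even-dimensional case ``requires'' a strictly stronger hypothesis. Both yield the same scalar constraints $\sum_\nu n_\nu z_\nu=0$ and $\sum_\nu n_\nu^2=0$. The factor $3$ in Theorem 3.1 is bookkeeping that mirrors the three-fold tensor product $Q_1(V)\otimes Q_2(V)\otimes Q_3(V)$ inside $\Phi$, and the absence of that factor in Theorem 4.1 mirrors the fact that each $\Psi_\lambda$ carries a single theta-type in $V$. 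Your structural explanation of \emph{why} the residual exponential picks up one copy rather than three is right; only the phrase ``$p_1$ suffices whereas Lemma 3.3 required $3p_1$'' overstates a distinction that is cosmetic. This does not affect the validity of the proof.
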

\begin{proof}
Similar to Lemma 3.3.
\end{proof}

Next, we will actually prove that these $\mathcal{F}_\lambda$ are holomorphic in $(t,\tau)$ on $\mathbf{C}\times\mathbf{H}$. Similar to Lemma 3.4, we have the following lemma
\begin{lem}
  (1)If $p_1(V)_{S^1}=0,$ and $c_3(E,g,d)=0$, then the action of $S$,
  \begin{equation}
    \mathcal{F}_1\left(\frac{t}{\tau},-\frac{1}{\tau}\right)=2^{al+N/2}\tau^{2k}\mathcal{F}_2(t,\tau),
  \end{equation}
  \begin{equation}
    \mathcal{F}_2\left(\frac{t}{\tau},-\frac{1}{\tau}\right)=2^{-(al+N/2)}\tau^{2k}\mathcal{F}_1(t,\tau),
  \end{equation}
  \begin{equation}
    \mathcal{F}_3\left(\frac{t}{\tau},-\frac{1}{\tau}\right)=\tau^{2k}\mathcal{F}_3(t,\tau).
  \end{equation}
  (2)The action of $T$, then
  \begin{equation}
    \mathcal{F}_1(t,\tau+1)=\mathcal{F}_1(t,\tau),
  \end{equation}
   \begin{equation}
    \mathcal{F}_2(t,\tau+1)=\mathcal{F}_3(t,\tau),
  \end{equation}
   \begin{equation}
    \mathcal{F}_3(t,\tau+1)=\mathcal{F}_2(t,\tau).
  \end{equation}
\end{lem}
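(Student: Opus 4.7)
I would follow the same template as in Lemma 3.4, substituting the $S$- and $T$-transformation laws of the Jacobi theta functions (as displayed in the proof of Lemma 3.4) into the explicit formulas (4.19)--(4.21) for $\mathcal{F}_\lambda(t,\tau)$ and reorganising. Two features distinguish the present situation. First, each $\mathcal{F}_\lambda$ carries only a single $\theta_\lambda$-factor on $V$ (rather than the product $\theta_1\theta_2\theta_3$ that appeared in Lemma 3.4), which accounts for the weaker hypothesis $p_1(V)_{S^1}=0$ and for the fact that $\{\mathcal{F}_1,\mathcal{F}_2,\mathcal{F}_3\}$ is permuted by $S$ and $T$ instead of each being fixed. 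Second, the odd Chern character $\mathrm{ch}(Q_j(E),g^{Q_j(E)},d,\tau)$ produces an additional anomaly under $S$, which is controlled precisely by the hypothesis $c_3(E,g,d)=0$.

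\textbf{The $T$-action.} I would first verify (4.25)--(4.27). Since $\theta$ and $\theta_1$ are invariant under $\tau\mapsto\tau+1$ up to the constant phase $e^{\pi i/4}$, while $\theta_2$ and $\theta_3$ are interchanged (again up to such a phase), all phase factors cancel in the numerator/denominator ratios appearing in (4.19)--(4.21). The substantive effect is that every $\theta_2$ in $\Psi_2$---both in the $V$-factor and inside $\mathrm{ch}(Q_2(E),g^{Q_2(E)},d,\tau)$ via (4.12)---becomes a $\theta_3$, converting $\mathcal{F}_2$ to $\mathcal{F}_3$, and symmetrically for $\mathcal{F}_3\to\mathcal{F}_2$; since $\Psi_1$ involves only $\theta$ and $\theta_1$, $\mathcal{F}_1$ is stable.

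\textbf{The $S$-action on the $M$- and $V$-pieces.} For (4.22)--(4.24) I would substitute (3.23)--(3.31) directly. The exponential factors $e^{-\pi i\tau y_j^2}$ and $e^{-\pi i\tau(x_i+m_it/\tau)^2}$ produced from the $\theta$-ratios cancel against $e^{+\pi i\tau y_j^2}$ and $e^{+\pi i\tau(x_i+m_it/\tau)^2}$ from the $\theta_\omega$-ratios, leaving only the $V$-contribution $\prod_\nu e^{\pi i\tau(z_\nu+n_\nu t/\tau)^2}$. Expanding the square, the hypothesis $p_1(V)_{S^1}=0$ forces $\sum_\nu n_\nu^2=\sum_\nu n_\nu z_\nu=0$ and $\sum_\nu z_\nu^2=0$ (together with the $V_0^{\mathbb{R}}$-part), so this factor equals $1$. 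The swap $\theta_1\leftrightarrow\theta_2$ under $S$, together with the stability of $\theta_3$, then yields the interchange $\mathcal{F}_1\leftrightarrow\mathcal{F}_2$ and the stability of $\mathcal{F}_3$. Collecting the $\sqrt{\tau/i}$ prefactors produces the weight $\tau^{2k}$, while the powers of $2$ on the right-hand sides of (4.22)--(4.23) come from $\mathrm{rk}\,\Delta(V)=2^l$ and $\mathrm{rk}\,\Delta(E)=2^{N/2}$ (the latter entering through (4.11)).

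\textbf{The main obstacle: transforming $\mathrm{ch}(Q_j(E),g^{Q_j(E)},d,\tau)$ under $S$.} This step has no analogue in Lemma 3.4. Differentiating $\theta_1(v,-1/\tau)=\frac{1}{i}\sqrt{\tau/i}\,e^{\pi i v^2\tau}\theta_2(v\tau,\tau)$ in $v$ yields
\begin{equation*}
\frac{\theta_1'(v,-1/\tau)}{\theta_1(v,-1/\tau)}=2\pi i v\tau+\tau\,\frac{\theta_2'(v\tau,\tau)}{\theta_2(v\tau,\tau)},
\end{equation*}
and analogously for $\theta_2,\theta_3$ with the indices permuted according to the $S$-action. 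Specialising $v=R_u/(4\pi^2)$ and using $R_u=(u^2-u)(g^{-1}dg)^2$, the anomalous term $2\pi i v\tau$ contributes a scalar multiple of $\tau\int_0^1(u^2-u)\,du\cdot\mathrm{Tr}[(g^{-1}dg)^3]$, i.e., a scalar multiple of $\tau\cdot c_3(E,g,d)$. The hypothesis $c_3(E,g,d)=0$ is exactly what kills this anomaly; once it is gone, the remaining term $\tau\,\theta_{\sigma(j)}'/\theta_{\sigma(j)}$ reproduces the Chern character of the partner index after the $\tau$-rescaling is absorbed into the overall $\tau$-power bookkeeping, completing the derivation of (4.22)--(4.24).
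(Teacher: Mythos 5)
Your proposal is correct and follows essentially the same route as the paper: apply the $S$- and $T$-transformation laws of the Jacobi theta functions to the Lefschetz formulas (4.19)--(4.21), use $p_1(V)_{S^1}=0$ to cancel the exponential anomaly on the $V$-factor, and use $c_3(E,g,d)=0$ to control the transformation of the odd Chern character ${\rm ch}(Q_j(E),g^{Q_j(E)},d,\tau)$. The only notable difference is that where the paper cites Proposition 2.2 of \cite{HY} as a black box for the latter point, you re-derive it directly by differentiating the $\theta_j$-transformation law and showing the anomalous $2\pi\mathbf{i}v\tau$ term integrates to a multiple of $\tau\cdot c_3(E,g,d)$, which is a correct and more self-contained treatment of the same step.
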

\begin{proof}
By Proposition 2.2 in \cite{HY}, we have if $c_3(E,g,d)=0$, then for any integer $i\geq 1$,
\begin{equation}
  \left\{{\rm ch}\left(Q_1(E),g^{Q_1(E)},d,-\frac{1}{\tau}\right)\right\}^{4i-1}=2^{N/2}\left\{\tau^{2i}{\rm ch}\left(Q_2(E),g^{Q_2(E)},d,\tau\right)\right\}^{4i-1},
\end{equation}
\begin{equation}
  \left\{{\rm ch}\left(Q_2(E),g^{Q_2(E)},d,-\frac{1}{\tau}\right)\right\}^{4i-1}=2^{-N/2}\left\{\tau^{2i}{\rm ch}\left(Q_1(E),g^{Q_1(E)},d,\tau\right)\right\}^{4i-1},
\end{equation}
\begin{equation}
  \left\{{\rm ch}\left(Q_3(E),g^{Q_3(E)},d,-\frac{1}{\tau}\right)\right\}^{4i-1}=\left\{\tau^{2i}{\rm ch}\left(Q_3(E),g^{Q_3(E)},d,\tau\right)\right\}^{4i-1}.
\end{equation}
Combining the condition $p_1(V)_{S^1}=0$ and (3.21)-(3.29), we obtain (4.22). The formulas (4.23) and (4.24) can be verified in a similar way.

For (2), we use the transformation laws of Jacobi theta-functions under the action of $T$ and Proposition 2.2 in \cite{HY}, which we can easily verify to get (4.25), (4.26) and (4.27).
\end{proof}

\begin{lem}
  For any function
  $$\mathcal{F}_\lambda, \ \ \lambda=\{1,2,3\}$$
  its modular transformation is holomorphic in $(t,\tau)\in\mathbf{R}\times\mathbf{H}$.
\end{lem}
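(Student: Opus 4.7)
The plan is to proceed exactly as in Lemma 3.5, invoking [12, Lemma 2.3], but with the Dirac-type Lefschetz formula replaced by Proposition 4.2 and with the additional odd Chern character factor ${\rm ch}(Q_j(E),g^{Q_j(E)},d,\tau)$ handled via Proposition 2.2 of [HY]. First I would fix $g_0\in SL_2(\mathbb{Z})$ and use Lemma 4.4, together with the modular quasi-invariance of the Chern character forms, to express each $\mathcal{F}_\lambda(g_0(t,\tau))$ as a finite combination of expressions of the same shape as in Proposition 4.2, up to a permutation of the index $\lambda$ and multiplication by the cocycles $\tau^{2k}$ and the constants $2^{\pm(al+N/2)}$, as in (4.22)--(4.27). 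The point is that the collection $\{\mathcal{F}_1,\mathcal{F}_2,\mathcal{F}_3\}$ is closed under the $SL_2(\mathbb{Z})$ action once $p_1(V)_{S^1}=0$ and $c_3(E,g,d)=0$, so it suffices to verify holomorphy of a finite list of representative expressions.

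Next I would analyze the pole structure of these representatives on $\mathbf{R}\times\mathbf{H}$. For $t\in\mathbf{R}$ and $\tau\in\mathbf{H}$, a denominator $\theta(x_i+m_it,\tau)$ can vanish only when the non-nilpotent part $m_it$ lies in $\mathbb{Z}+\mathbb{Z}\tau$, which for real $t$ forces $m_it\in\mathbb{Z}$; similarly for the $n_\nu t$ arguments in the $V$-contribution. Summing over all fixed-point components $F_\alpha$ and using the standard cancellation argument of Liu [12], generalized to fixed sets of arbitrary dimension in [HY], these isolated would-be singularities cancel among themselves, so each representative is holomorphic on $\mathbf{R}\times\mathbf{H}$. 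The factor ${\rm ch}(Q_j(E),g^{Q_j(E)},d,\tau)$ does not introduce new singularities because (4.11)--(4.12) are polynomial in $g^{-1}dg$ with coefficients that are smooth in $\tau\in\mathbf{H}$.

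The main obstacle, and the only genuinely new ingredient compared with Lemma 3.5, is the bookkeeping of the odd Chern character factor. The key input is [HY, Proposition 2.2], already invoked in the proof of Lemma 4.4: under $c_3(E,g,d)=0$, the forms ${\rm ch}(Q_j(E),g^{Q_j(E)},d,\tau)$ transform among themselves in exactly the pattern $(1)\leftrightarrow(2)$, $(3)\to(3)$ under $S$, and $(1)\to(1)$, $(2)\leftrightarrow(3)$ under $T$, matching the permutations of the theta indices visible in (4.22)--(4.27). Once this compatibility is in hand, the modular bookkeeping matches termwise that of the underlying theta quotients, and the rest of the argument is a direct translation of [12, Lemma 2.3] to the present Toeplitz setting.
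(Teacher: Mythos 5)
Your proposal is correct and takes essentially the same approach as the paper: both reduce to Liu's \cite{Li2} Lemma~2.3, using Lemma~4.4 (built on Proposition~2.2 of \cite{HY}) to close the family $\{\mathcal{F}_1,\mathcal{F}_2,\mathcal{F}_3\}$ under the $SL_2(\mathbb{Z})$-action, then invoking the index-theoretic holomorphy argument for each $\mathcal{F}_\lambda$ on $\mathbf{R}\times\mathbf{H}$. The paper's own proof is a single sentence pointing to that reference, and your elaboration -- pole analysis at $m_it\in\mathbb{Z}$, the $t$-independence of the odd Chern character factors since $g$ is $S^1$-invariant, and the permutation pattern of the theta indices -- is a faithful expansion of what that citation implies.
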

\begin{proof}
The proof is almost the same as the proof of [12, Lemma 2.3] except that we use Proposition 4.2 instead of the corresponding Lefschetz fixed point formulas.
\end{proof}

$Proof~of~Theorem~4.1$. We prove that these $\mathcal{F}_\lambda$ are holomorphic on $\mathbf{C}\times\mathbf{H}$ which implies the rigidity of Theorem 4.1. We denote by $\mathcal{F}$ one of the functions: $\{\mathcal{F}_1,\mathcal{F}_2,\mathcal{F}_3\}$. By (4.19), (4.20) and (4.21), we see that
the possible poles of $\mathcal{F}(t,\tau)$ can be written in the form $t=\frac{k}{l}(c\tau+d)$ for
integers $k,l,c,d$  with $(c, d)=1$.

We can always find integers $a,b$ such that $ad-bc=1$. Then the matrix $g_1=\left(\begin{array}{cc}
\ d & -b  \\
 -c  & a
\end{array}\right)\in SL_2(\mathbb{Z})$ induces an action
$$\mathcal{F}(g_1(t,\tau))=\mathcal{F}\left(\frac{t}{-c\tau+a},\frac{d\tau-b}{-c\tau+a}\right).$$
Now, if $t=\frac{k}{l}(c\tau+d)$ is a polar divisor of $\mathcal{F}(t,\tau)$, then one polar divisor of
$\mathcal{F}(g_1(t,\tau))$ is given by
$$\frac{t}{-c\tau+a}=\frac{k}{l}\left(c\frac{d\tau-b}{-c\tau+a}+d\right)$$
which gives $t=\frac{k}{l}$.

By Lemma 4.4,  we know that up to some constant, $\mathcal{F}(g_1(t,\tau))$ is still one of $\{\mathcal{F}_1,\mathcal{F}_2,\mathcal{F}_3\}$. This contradicts Lemma 4.5, therefore, this completes the
proof of Theorem 4.1.

Similarly, use Liu's method \cite{Li2}. In the odd-dimensional case, we can get the following theorem
\begin{thm}
For an odd dimensional connected spin manifold with non-trivial $S^1$-action.\\
(1)If $3p_1(V)_{S^1}=0$ and $c_3(E,g,d)=0$, then the Toeplitz operators $\mathcal{T}\otimes\Phi\otimes(Q_1(E),g^{Q_1(E)})$,
$\mathcal{T}\otimes\Phi\otimes(Q_2(E),g^{Q_2(E)})$, $\mathcal{T}\otimes\Phi\otimes(Q_3(E),g^{Q_3(E)})$
and $\mathcal{T}\otimes\Phi\otimes(Q(E),g^{Q(E)})$ are rigid.\\
(2)If $p_1(V)_{S^1}=0$ and $c_3(E,g,d)=0$, then the Toeplitz operators $\mathcal{T}\otimes\Psi_i\otimes(Q(E),g^{Q(E)})$ are rigid.
Where $Q(E)=Q_1(E)\otimes Q_2(E)\otimes Q_3(E)$.
\end{thm}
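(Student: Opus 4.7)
The plan is to adapt the unified Liu--Han--Yu machinery from Sections~3 and~4 to the new twists, treating the seven Toeplitz operators listed in~(1) and~(2) as one family. For each operator I would first write down its equivariant Lefschetz number via the odd-dimensional fixed point formula, combining the recipe of Proposition~3.2 (for the $\Phi$ or $\Psi_i$ factor) with formulas~(4.11)--(4.12) of Han--Yu (for the $Q_j(E)$ factor), so that it acquires a closed-form theta-function expression. Viewing this as a meromorphic function $\mathcal{F}(t,\tau)$ on $\mathbf{C}\times\mathbf{H}$, rigidity is equivalent to showing $\partial_t\mathcal{F}\equiv 0$.

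Next, as in Lemmas~3.3 and~4.3, I would verify that each member of the family is doubly quasi-periodic in $t$. Invariance under $t\mapsto t+a$ for $a\in 2\mathbb{Z}$ is immediate from the first-column transformation formulas for $\theta,\theta_1,\theta_2,\theta_3$. For $t\mapsto t+a\tau$ the $V$-piece contributes a phase of the form $\exp(-2\pi\mathbf{i}a(\kappa\sum_\nu n_\nu z_\nu + \kappa a\tau \sum_\nu n_\nu^2 /2))$, where $\kappa\in\{1,3\}$ according to whether the $V$-factor contains a single $\theta$ (case~(2), using $\Psi_i$) or the full product $\theta_1\theta_2\theta_3$ (case~(1), using $\Phi$). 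The hypothesis $p_1(V)_{S^1}=0$ in case~(2), respectively $3p_1(V)_{S^1}=0$ in case~(1), kills this phase on each fixed component $F_\alpha$.

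The crucial step is the modular transformation. For the $V$- and $TM$-factors one uses the transformation laws tabulated in Lemma~3.4. For the odd-Chern character factors one invokes the Han--Yu identities (4.28)--(4.30), whose validity requires $c_3(E,g,d)=0$: under $S$ the Chern characters of $Q_1(E)$ and $Q_2(E)$ are swapped up to $2^{\pm N/2}$ while that of $Q_3(E)$ is preserved, and under $T$ those of $Q_2(E)$ and $Q_3(E)$ are swapped while that of $Q_1(E)$ is preserved. Consequently the triple product $Q(E)=Q_1(E)\otimes Q_2(E)\otimes Q_3(E)$ is invariant (up to weight $\tau^{2k}$ factors and overall constants) under both $S$ and $T$. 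In case~(1) this matches the $\theta_1\theta_2\theta_3$ symmetry of the $V$-factor of $\Phi$, so the four functions $\mathcal{F}_{\Phi\otimes Q_j}$ $(j=1,2,3)$ and $\mathcal{F}_{\Phi\otimes Q}$ close under $\{S,T\}$ modulo weight factors. In case~(2) the three functions $\mathcal{F}_{\Psi_i\otimes Q}$ are permuted exactly as in Lemma~4.4, the extra $Q(E)$ contributing only multiplicative constants. The analog of Lemmas~3.5 and~4.5, obtained by feeding the new Lefschetz expressions into the $[12,\text{Lemma }2.3]$ argument, then yields holomorphy of each $\mathcal{F}$ on $\mathbf{R}\times\mathbf{H}$.

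Finally I would conclude exactly as in Theorems~3.1 and~4.1. Any pole of $\mathcal{F}(t,\tau)$ lies on a divisor $t=\frac{k}{l}(c\tau+d)$ with $(c,d)=1$; choosing $a,b$ with $ad-bc=1$ and transporting by $g_1=\left(\begin{array}{cc}d & -b \\ -c & a\end{array}\right)\in SL_2(\mathbb{Z})$ pushes the divisor to $t=\frac{k}{l}\in\mathbf{R}$. By the closure property established in the previous step, $\mathcal{F}(g_1(t,\tau))$ is a nonzero constant multiple of one of the members of the family, hence holomorphic at $t=\frac{k}{l}$, contradicting the existence of the pole. Thus every $\mathcal{F}$ is holomorphic on $\mathbf{C}\times\mathbf{H}$, and together with the double quasi-periodicity this forces $\partial_t\mathcal{F}\equiv 0$. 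The main obstacle I anticipate is the bookkeeping in the modular step: one must verify that the $SL_2(\mathbb{Z})$-orbit of each $\mathcal{F}$ is genuinely contained in the finite family listed in the theorem, so that the contradiction argument has somewhere to land and does not escape into Chern characters of bundles not covered by the hypothesis $c_3(E,g,d)=0$.
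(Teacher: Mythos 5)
Your proposal is correct and spells out exactly what the paper compresses into the single phrase ``Similarly, use Liu's method'': the Lefschetz numbers combine the theta-quotient formulas of Proposition 3.2 with the Han--Yu odd Chern character of $Q_j(E)$ from (4.11)--(4.12), the hypotheses $3p_1(V)_{S^1}=0$ (resp.\ $p_1(V)_{S^1}=0$) and $c_3(E,g,d)=0$ give double quasi-periodicity together with the $S,T$-closure of the four-function family in (1) and the three-function family in (2), and the pole-pushing contradiction of Theorems 3.1 and 4.1 then runs verbatim. Your bookkeeping of how $S$ and $T$ permute $Q_1(E),Q_2(E),Q_3(E)$ and fix $Q(E)$ (up to constants and weight factors) is right, so the orbit of each $\mathcal{F}$ stays inside the stated finite family and the argument closes.
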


\section{Acknowledgements}

 The second author was supported in part by NSFC No.11771070. The authors also thank the referee for his (or her) careful reading and helpful comments.

\vskip 1 true cm

\section{Data availability}

No data was gathered for this article.

\section{Conflict of interest}

The authors have no relevant financial or non-financial interests to disclose.

\vskip 1 true cm

\bigskip
\bigskip
\indent{J. Guan}\\
 \indent{School of Mathematics and Statistics,
Northeast Normal University, Changchun Jilin, 130024, China }\\
\indent E-mail: {\it guanjy@nenu.edu.cn }\\
\indent{K. Liu}\\
 \indent{Mathematical Science Research Center,Chongqing University of Technology, Chongqing, 400054, China }\\
\indent E-mail: {\it kefeng@cqut.edu.cn }\\
\indent{Y. Wang}\\
 \indent{School of Mathematics and Statistics,
Northeast Normal University, Changchun Jilin, 130024, China }\\
\indent E-mail: {\it wangy581@nenu.edu.cn }\\

\end{document}